\date{\today}
\newcommand{\Z}{{\mathbb Z}}
\newcommand{\R}{{\mathbb R}}
\newcommand{\N}{{\mathbb N}}
\renewcommand{\P}{{\mathbb P}}
\newcommand{\E}{{\mathbb E}}
\newcommand{\Prob}{\mathop{\mathbb{P}}\nolimits}
\newcommand{\bK}{\mathbf{K}}
\newcommand{\mM}{\mathcal{M}}
\newcommand{\mF}{\mathcal{F}}
\newcommand{\dist}{\mathop{\mathrm{dist}}\nolimits}
\newcommand{\Leb}{\mathop{\mathrm{Leb}}\nolimits}
\newcommand{\Ind}{1\!\!\mathrm{I}}
\newcommand{\id}{\mathrm{id}}
\newtheorem{theorem}{Theorem}[section]
\newtheorem*{theorem*}{Theorem}
\newtheorem{lemma}[theorem]{Lemma}
\newtheorem{prop}[theorem]{Proposition}
\newtheorem{coro}[theorem]{Corollary}
\newtheorem{example}[theorem]{Example}
\theoremstyle{definition}
\newtheorem{remark}[theorem]{Remark}
\newtheorem{defi}[theorem]{Definition}
\newcommand{\eps}{{\varepsilon}}
\newcommand{\mgr}{\mu}  
\newcommand{\msp}{\nu}  
\title[Non-stationary Ergodic Theorem]{Non-stationary version of Ergodic Theorem \\ for random dynamical systems}
\author[A.\ Gorodetski]{Anton Gorodetski}
\address{Department of Mathematics, University of California, Irvine, CA~92697, USA}
\email{asgor@uci.edu}
\thanks{A.\ G.\ was supported in part by NSF grant DMS--1855541. Part of the work was done while A.\ G.\ was in residence at Institute Mittag-Leffler in Djursholm, Sweden, during the Spring semester of 2023, and was supported by the Swedish Research Council under grant  no. 2016-06596. } 
\author[V. Kleptsyn]{Victor Kleptsyn}
\address{CNRS, Institute of Mathematical Research of Rennes, IRMAR, UMR 6625 du CNRS}
\email{victor.kleptsyn@univ-rennes1.fr}
\thanks{V.K. was supported in part by ANR Gromeov (ANR-19-CE40-0007) and by Centre Henri Lebesgue (ANR-11-LABX-0020-01)}
\begin{document}

\maketitle

\begin{center}
{\it To our teacher, Professor Yulij Sergeevich Ilyashenko}
\end{center}

\

\begin{abstract}
We prove a version of pointwise Ergodic Theorem for non-stationary random dynamical systems. Also, we discuss two specific examples where the result is applicable: non-stationary iterated function systems and non-stationary random matrix products.
\end{abstract}

\section{Introduction}\label{s.intro}

Birkhoff Ergodic Theorem is one of the key tools of the theory of dynamical systems. In the context of topological dynamics it claims that for a continuous dynamical system $f:X\to X$ on a metric compact $X$ and an $f$-invariant measure $\msp$ on $X$, given a ``test function'' $\varphi\in C(X)$,  the time averages
\[
\frac{1}{n} \sum_{j=0}^{n-1}\varphi(f^j (x))
\]
converge for $\msp$-a.e. $x\in X$, and if the measure $\msp$ is ergodic, the limit almost everywhere equals the space average $\int_X \varphi \, d\msp$.

It admits a natural generalization for the case of random dynamical systems. Namely, let $X$ be a compact metric space, and $\mgr$ be a probability measure on the space of continuous maps $C(X, X)$. The iterations of the corresponding random dynamical system are the sequences of compositions
\[
f_1,\  f_2\circ f_1,\  \dots, \ f_n\circ \dots \circ f_1,\  \dots,
\]
where $f_i:X\to X$ are chosen randomly and independently w.r.t. the measure~$\mgr$.

An analogue of the notion of an invariant measure is the one of a stationary measure. Namely, a $\mgr$-\emph{stationary} measure $\msp$ is a probability measure on~$X$ such that $\mgr * \msp=\msp$, where the convolution~$\mgr * \msp$ is the law of $f(x)$ for independent $f$ chosen w.r.t.~$\mgr$ and $x$ w.r.t.~$\msp$. Such a measure always exists (this can be shown via an analogue of the Krylov-Bogolyubov averaging procedure), though is not unique in general (in the same way as an invariant measure is not unique for a classical deterministic dynamical system in general).

The following theorem corresponds to the Birkhoff Ergodic Theorem for the stationary random dynamics:
\begin{theorem}[Random Birkhoff Ergodic Theorem]
For any ergodic stationary measure $\nu$ on $X$, for any $\varphi\in C(X, \mathbb{R})$, for $\nu$-a.e. $x\in X$, $\mu^{\mathbb{N}}$-almost surely one has
$$
\frac{1}{n}\sum_{k=0}^{n-1}\varphi(f_k\circ\ldots\circ f_1(x))\to \int_X\varphi(x)d\nu(x),
$$
where $f_1, f_2, \ldots$ are chosen randomly and independently, with respect to the distribution $\mu$.
\end{theorem}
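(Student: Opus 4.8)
The plan is to deduce the statement from the classical (deterministic) Birkhoff--Khinchin ergodic theorem applied to a skew product over the one-sided Bernoulli shift. Put $\Omega:=C(X,X)^{\mathbb{N}}$, let $\sigma:\Omega\to\Omega$ be the left shift, and define
\[
F:\Omega\times X\to\Omega\times X,\qquad F(\omega,x)=\bigl(\sigma\omega,\ f_1(x)\bigr),\quad \omega=(f_1,f_2,\dots),
\]
so that $F^k(\omega,x)=\bigl(\sigma^k\omega,\ f_k\circ\dots\circ f_1(x)\bigr)$. The first step is to check that $\mathbb{P}:=\mu^{\mathbb{N}}\times\nu$ is $F$-invariant: pushing $\mathbb{P}$ forward under $F$ and using Fubini, this reduces exactly to the independence of $f_1$ from $(f_2,f_3,\dots)$ together with the stationarity identity $\mu*\nu=\nu$. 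Since $X$ is a compact metric space, $\Omega\times X$ is a standard Borel probability space and $\Phi(\omega,x):=\varphi(x)$ is bounded; hence Birkhoff's theorem applies to $(F,\mathbb{P},\Phi)$ and gives, for $\mathbb{P}$-a.e.\ $(\omega,x)$,
\[
\frac1n\sum_{k=0}^{n-1}\varphi\bigl(f_k\circ\dots\circ f_1(x)\bigr)=\frac1n\sum_{k=0}^{n-1}\Phi\bigl(F^k(\omega,x)\bigr)\ \longrightarrow\ \widetilde{\Phi}(\omega,x):=\mathbb{E}\bigl[\Phi\mid\mathcal{I}\bigr](\omega,x),
\]
where $\mathcal{I}$ is the $\sigma$-algebra of $F$-invariant sets. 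By Fubini this already yields the asserted convergence for $\nu$-a.e.\ $x$ and $\mu^{\mathbb{N}}$-a.e.\ $\omega$; it remains to identify the limit $\widetilde{\Phi}$ with the constant $\int_X\varphi\,d\nu$.

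To do so I would introduce $h(x):=\int_\Omega\widetilde{\Phi}(\omega,x)\,d\mu^{\mathbb{N}}(\omega)$ and proceed in two steps. First, using the $F$-invariance $\widetilde{\Phi}=\widetilde{\Phi}\circ F$, the independence of $\sigma\omega$ from $f_1$, and Fubini, one checks that $h$ is $P$-harmonic, i.e.\ $h=Ph$ where $P\psi(x)=\int_{C(X,X)}\psi(f(x))\,d\mu(f)$, and $h$ is bounded since $\varphi$ is. Second, conditioning the identity $\widetilde{\Phi}=\widetilde{\Phi}\circ F^n$ on the $\sigma$-algebra generated by $(x,f_1,\dots,f_n)$ gives $\mathbb{E}[\widetilde{\Phi}\mid x,f_1,\dots,f_n]=h\bigl(f_n\circ\dots\circ f_1(x)\bigr)$, and since $\widetilde{\Phi}$ is measurable with respect to all of $(x,f_1,f_2,\dots)$, Lévy's upward martingale convergence theorem yields
\[
\widetilde{\Phi}(\omega,x)=\lim_{n\to\infty}h\bigl(f_n\circ\dots\circ f_1(x)\bigr)\qquad(\mathbb{P}\text{-a.e.}).
\]
Now ergodicity of the stationary measure $\nu$ enters through the classical fact that a bounded $P$-harmonic function is $\nu$-a.e.\ constant when $\nu$ is an ergodic stationary measure (equivalently: the skew product $(F,\mathbb{P})$ is ergodic), which itself is proved by noting that $h\bigl(f_n\circ\dots\circ f_1(x)\bigr)$ is a bounded martingale for $x\sim\nu$ and identifying its almost sure limit. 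Then $h\equiv\int_X h\,d\nu=\int_X\varphi\,d\nu$, hence $\widetilde{\Phi}\equiv\int_X\varphi\,d\nu$, and the theorem follows.

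The step I expect to be the main obstacle is precisely the passage from ``$\nu$ is an ergodic stationary measure'' to the constancy of bounded $P$-harmonic functions (equivalently, ergodicity of the skew product $(F,\mathbb{P})$): this is the place where the genuine probabilistic structure, rather than soft measure theory, does the work, and some care is needed because $\Omega$ carries the \emph{one-sided} shift. Everything else is routine: invariance of $\mathbb{P}$ is a Fubini computation using stationarity, applicability of Birkhoff is standard, the passages between ``$\mathbb{P}$-a.e.''\ and ``$\nu$-a.e., $\mu^{\mathbb{N}}$-a.e.''\ are Fubini, and boundedness of $\Phi$ comes for free from compactness of $X$ and continuity of $\varphi$. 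If one is willing to quote the equivalence between ergodicity of $\nu$ and of the skew product, the argument shortens to: verify $F$-invariance of $\mathbb{P}$, apply Birkhoff to the ergodic system $(F,\mathbb{P})$, and read off the constant $\int_X\varphi\,d\nu$ via Fubini.
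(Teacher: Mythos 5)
The paper never proves this statement: it is quoted in the introduction as the known random (Kakutani-type) ergodic theorem for stationary i.i.d.\ compositions, so there is no in-paper argument to compare yours against. On its own merits your proposal is correct and is in fact the standard proof: the product measure $\mu^{\mathbb{N}}\times\nu$ is $F$-invariant precisely because $\mu*\nu=\nu$ (a Fubini computation, as you say), Birkhoff applies to the bounded observable $\Phi(\omega,x)=\varphi(x)$, and the whole content sits in showing that the limit $\widetilde\Phi=\mathbb{E}[\Phi\mid\mathcal I]$ is a.e.\ constant, equivalently that ergodicity of $\nu$ as a stationary measure forces ergodicity of the skew product. You correctly isolate this as the crux, and your reduction of $\widetilde\Phi$ to $\lim_n h(f_n\circ\dots\circ f_1(x))$ via L\'evy's upward theorem, with $h(x)=\int\widetilde\Phi(\omega,x)\,d\mu^{\mathbb{N}}(\omega)$ bounded and $P$-harmonic, is the right mechanism. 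The only place where your sketch is thinner than it should be is the final classical fact itself: ``identifying the a.s.\ limit'' of the bounded martingale $h(x_n)$ is not by itself the argument. The clean completion is to use stationarity of $\nu$ together with orthogonality of martingale increments, so that $\sum_n\mathbb{E}\bigl[(h(x_{n+1})-h(x_n))^2\bigr]<\infty$ forces each (identically distributed) increment to vanish, i.e.\ $h(f(x))=h(x)$ for $\mu\times\nu$-a.e.\ $(f,x)$; then the sets $\{h>c\}$ are $P$-invariant mod $\nu$ and ergodicity of $\nu$ makes $h$ constant, equal to $\int\varphi\,d\nu$ since conditional expectation preserves the mean. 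With that fact either proved this way or quoted (it is standard, e.g.\ in Kifer's or Benoist--Quint's treatments of random ergodic theorems), your proof is complete.
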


However, this generalisation heavily relies on the fact that $f_n$'s are identically distributed.
What can be said if $f_n$'s are still chosen independently, but each $f_n$ is distributed w.r.t. a different measure~$\mgr_n$? This can be represented symbolically by the following diagram:

\[
{X \xrightarrow[f_1]{\mgr_1} X \xrightarrow[f_2]{\mgr_2} X \rightarrow \dots \rightarrow X \xrightarrow[f_n]{\mgr_n} X \rightarrow \dots}
\]

At first glance, many notions and tools disappear, starting with the one of the stationary measure: the measures $\mu_n$ on different steps may differ, and the system of equations $\msp=\mgr_n * \msp$ for all $n$ is usually incompatible. Thus even finding a proper statement, not to mention proving it, is non-evident.

Meanwhile, such a setting naturally arises in some situations. For example, the study of one-dimensional Anderson Localization problem~\cite{GK3} in presence of a background potential leads to the study of products of independent non-identically distributed matrices, the non-stationary Furstenberg Theorem~\cite{GK2}, that is within the frames of non-stationary ergodic theory. It also motivated the study of H\"older regularity
of averaged images of a given initial measure in~\cite[Theorem 2.8]{GKM}.

The goal of the present paper is to discuss a paradigm for ergodic theorems in a non-stationary setting, presenting both (counter)-examples and a restricted version of an ergodic theorem, see Theorem~\ref{t.main} below. The latter is only a ``proof-of-concept''; it definitely can be vastly generalized, and we hope it soon will be: our goal is to motivate the farther research in this direction.

\section{Preliminaries}

Throughout this paper, we set $X$ to be a compact metric space, and assume that a ``test function''
$\varphi\in C(X)$ is given. Also, for every $n$ we assume that a measure $\mgr_n$ on $C(X,X)$ is given (if the dynamics is assumed to be invertible, one can ask instead for the measure on the set of homeomorphisms of~$X$).
We then denote
\begin{equation}\label{eq:Prob}
{\Prob}
:=\prod_{n=1}^\infty \mgr_n,
\end{equation}
and our goal is to describe time averages along ${\Prob}$-almost every sequence of iterations.

Let us denote by $\mathcal{M}=\mathcal{M}(X)$ the space of Borel probability measures on the compact metric space~$X$. We consider it to be equipped with the Wasserstein distance (that is one of the ways to metrize the weak-* topology in the space of probability measures):
\begin{defi}
    Let $\msp_1, \msp_2\in \mathcal{M}$ be two probability measures on $X$. The \emph{Wasserstein distance} between $\msp_1$ and $\msp_2$ is defined as
    \begin{equation}\label{eq:W-def}
        \text{dist}_{\mathcal{M}}(\msp_1, \msp_2) = \inf_{\gamma} \iint_{X\times X} d_X(x, y) \, d \gamma(x, y),
    \end{equation}
where the infimum is taken over all probability measures $\gamma$ on $X \times X$ with the marginals $\msp_1$ and $\msp_2$, that is, $(\pi_j)_* \gamma = \msp_j$, $j=1,2$, where $\pi_{1,2}:X\times X\to X$ is the projection on the first and second factor respectively.
\end{defi}

Next, note that for a random orbit
\[
(x_n)_{n\in\N}, \quad x_n=f_n(x_{n-1})
\]
the non-stationarity of the dynamics implies that the law of $x_n$ can vary quite strongly, thus the same applies to the law of $\varphi(x_n)$ and to its expectation. Thus, we should not expect the time averages $\frac{1}{n} \sum_{k=0}^{n-1} \varphi(x_k)$ to converge. Instead, we should expect these to have a deterministic behaviour, similarly to the non-stationary Law of Large Numbers. Namely, under reasonable assumptions for independent, but not identically distributed random variables $\xi_n$ one has
\[
\frac{1}{n} \left| \sum_{k=1}^{n} \xi_k  -  \sum_{k=1}^{n} \E \xi_k \right| \to 0, \quad n\to\infty.
\]
As the Birkhoff Erogic Theorem generalises the Law of Large Numbers, we should expect
\begin{equation}\label{eq:limit}
\frac{1}{n} \left| \sum_{k=1}^{n} \varphi(x_k)  -  \sum_{k=1}^{n} \int_X \varphi(x) d\nu_k(x) \right| \to 0, \quad n\to\infty
\end{equation}
for some reasonable non-random measures~$\nu_k$. Indeed, this turns out to be the case: see Theorem~\ref{t.main} below.

\section{Statement of the main result}

Our main result will be stated under the following assumption. This assumption, in a sense, generalizes (and slightly strengthens) the \emph{unique} ergodicity. The latter requires the stationary measure to be unique, so this setting is more restrictive than ``simple'' ergodicity; however, in this setting such an assumption seems to be appropriate, see the discussion in Section~\ref{s:counter} below, and Example~\ref{ex:no-average} therein.

\vspace{3mm}

{\bf Standing Assumption:} {\it We will say that a sequence of distributions $\mgr_1, \mgr_2, \mgr_3, \ldots$ on $C(X, X)$ satisfies the Standing Assumption if for any $\delta>0$ there exists $m\in \mathbb{N}$ such that the images of any two initial measures after averaging over $m$ random steps after any initial moment $n$ become $\delta$-close to each other:
$$
\forall \msp, \msp'\in \mathcal{M}, \quad \forall n\in \mathbb{N}, \quad  \dist_{\mathcal{M}} (\mgr_{n+m}*\ldots *\mgr_{n+1} *\msp, \mgr_{n+m}*\ldots *\mgr_{n+1} *\msp') < \delta.
$$
}

Note that if after a given number $m$ of steps the averaged images of any two initial measures $\msp, \msp'$ are close to each other, one can pick any of these images and state that all the others are close to it. So one can pick any measure $\msp_0$ (for instance, a Dirac one at a given initial point $x_0$) and define
\begin{equation}\label{eq:nu-def}
\msp_{n}:=\mgr_n*\msp_{n-1}, \quad n=1,2,\dots.
\end{equation}
Then, for any given measure $\msp$, the convolution $\mgr_{n+m}*\ldots *\mgr_{n+1} *\msp$ is close to $\msp_{n+m}$, and this motivates the appearance of measures $\msp_n$ in~\eqref{eq:limit}.

Here is the main result of this paper:

\begin{theorem}\label{t.main}
Suppose the sequence of distributions $\mgr_1, \mgr_2, \mgr_3, \ldots$ satisfies the Standing Assumption above. Given any Borel probability measure $\msp_0$ on $X$, define
$$
\msp_{n}:=\mgr_n*\msp_{n-1}, \quad n=1,2,\dots.
$$

Then for any $\varphi\in C(X, \mathbb{R})$ and any $x\in X$, almost surely
$$
\frac{1}{n}\left|\sum_{k=1}^n \varphi(f_k\circ \ldots \circ f_1(x))-\sum_{k=1}^n \int_X \varphi \, d\nu_k \right|\to 0 \ \ \text{\rm as}\ \ \ n\to \infty,
$$
where the measures $\msp_n$ are defined by~\eqref{eq:nu-def}.

Moreover, an analogue of the Large Deviations Theorem holds. Namely, for any $\eps>0$ there exist $C,\delta>0$ such that for any $x\in X$
\begin{equation}\label{eq:LD}
\forall n\in \mathbb{N}, \quad \Prob\left(
\frac{1}{n}\left|\sum_{k=1}^{n} \varphi(f_k\circ \ldots \circ f_1(x))-\sum_{k=1}^n \int_X \varphi \, d\nu_k \right|
> \eps
\right) < C \exp(-\delta n).
\end{equation}
\end{theorem}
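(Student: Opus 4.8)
The plan is to separate the randomness from the non-stationarity: the genuinely random fluctuations will be handled by martingale concentration, while the drift of the measures $\msp_n$ will be absorbed using the contraction built into the Standing Assumption. A routine preliminary reduction lets us assume $\varphi$ is Lipschitz: on a compact metric space every $\varphi\in C(X)$ is a uniform limit of $L$-Lipschitz functions, and replacing $\varphi$ by a function $\eps_0$-close to it alters $\frac1n\big|\sum_{k\le n}\varphi(x_k)-\sum_{k\le n}\int\varphi\,d\msp_k\big|$ by at most $2\eps_0$ uniformly in $x$ and in the randomness (here $x_k:=f_k\circ\cdots\circ f_1(x)$); so it suffices to prove \eqref{eq:LD} for $L$-Lipschitz $\varphi$, and then the a.s.\ statement and the general \eqref{eq:LD} follow (see Step 3). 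I also record the one consequence of the Standing Assumption used below: for every $\eta>0$ there is $m=m(\eta)$ such that, applying the assumption at initial moment $k-m$ to the pair $\mathrm{law}(x_{k-m})$ and $\msp_{k-m}$, one gets $\dist_{\mathcal M}(\mathrm{law}(x_k),\msp_k)<\eta$ for all $k\ge m$; since $|\int\varphi\,d\rho-\int\varphi\,d\rho'|\le L\,\dist_{\mathcal M}(\rho,\rho')$, this gives $|\E\varphi(x_k)-\int\varphi\,d\msp_k|\le L\eta$ for $k\ge m$, and hence (the first $m-1$ terms being bounded by $2\|\varphi\|_\infty$) $\limsup_n\frac1n\big|\sum_{k\le n}(\E\varphi(x_k)-\int\varphi\,d\msp_k)\big|\le L\eta$ for every $\eta$, i.e.\ this quantity tends to $0$. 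It thus remains to control $\frac1n\sum_{k\le n}\big(\varphi(x_k)-\E\varphi(x_k)\big)$.

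\emph{The decomposition at scale $m$.} Fix $\eps>0$, choose $\eta>0$ with $L\eta$ small relative to $\eps$, and let $m=m(\eta)$. Write $\mF_j:=\sigma(f_1,\dots,f_j)$ with $\mF_0$ trivial, and split, for every $k\ge1$,
\[
\varphi(x_k)-\E\varphi(x_k)=D_k+E_k,\qquad D_k:=\varphi(x_k)-\E[\varphi(x_k)\mid\mF_{(k-m)^+}],\qquad E_k:=\E[\varphi(x_k)\mid\mF_{(k-m)^+}]-\E\varphi(x_k).
\]
Conditioning at distance $m$ in time (rather than on $\mF_{k-1}$) is the point: by independence $\E[\varphi(x_k)\mid\mF_{k-m}]=g_k(x_{k-m})$ where $g_k(y)=\int\varphi\,d(\mgr_k*\cdots*\mgr_{k-m+1}*\delta_y)$, and applying the Standing Assumption at initial moment $k-m$ to Dirac measures $\delta_y,\delta_{y'}$ gives $\osc g_k\le L\eta$, hence $|E_k|\le L\eta$ deterministically for all $k$ (with $E_k=0$ for $k\le m$). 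At the same time, for each residue $r\in\{1,\dots,m\}$ the sequence $(D_{r+jm})_{j\ge0}$ is a martingale difference sequence for the filtration $(\mF_{(r+(j-1)m)^+})_{j\ge0}$: indeed $D_{r+jm}$ is $\mF_{r+jm}$-measurable, and $g_{r+jm}(x_{r+(j-1)m})=\E[\varphi(x_{r+jm})\mid\mF_{r+(j-1)m}]$ is $\mF_{r+(j-1)m}$-measurable, so $\E[D_{r+jm}\mid\mF_{r+(j-1)m}]=0$; moreover $|D_k|\le2\|\varphi\|_\infty$.

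\emph{Concentration and conclusion.} Applying the Azuma--Hoeffding inequality to each of these $m$ bounded martingale difference sequences and taking a union bound over the $m$ residue classes yields $\Prob\big(\frac1n\big|\sum_{k\le n}D_k\big|>\eps/3\big)\le C\exp(-\delta n)$ with $C,\delta>0$ depending only on $\eps$, $m$ and $\|\varphi\|_\infty$ — not on $x$. Combining this with $|E_k|\le L\eta$, with the estimate for $\frac1n\sum(\E\varphi(x_k)-\int\varphi\,d\msp_k)$ from Step 1, choosing $\eta$ so that $2L\eta<\eps/3$ and then $n$ large enough that the remaining $O(m\|\varphi\|_\infty/n)$ terms are $<\eps/3$ (and absorbing the finitely many small $n$ into $C$), we obtain \eqref{eq:LD} for Lipschitz $\varphi$. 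The general case of \eqref{eq:LD} follows by the Lipschitz reduction (approximating $\varphi$ to within $\eps/4$ by an $L$-Lipschitz $\psi$ and applying the Lipschitz bound to $\psi$ with accuracy $\eps/2$), and the a.s.\ convergence follows from \eqref{eq:LD} by Borel--Cantelli along $\eps=1/j$, $j\to\infty$, again combined with the Lipschitz reduction.

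\emph{Main obstacle.} The one nontrivial idea is the split $\varphi(x_k)-\E\varphi(x_k)=D_k+E_k$ at the scale $m$ coming from the Standing Assumption: the contraction makes $E_k$ deterministically $O(\eta)$, while the truly random part $D_k$, though not itself a martingale difference sequence, decomposes into $m$ interleaved martingale difference sequences to which standard concentration applies. Everything else — the Lipschitz approximation, the inequality $|\int\varphi\,d\rho-\int\varphi\,d\rho'|\le L\,\dist_{\mathcal M}(\rho,\rho')$, the bookkeeping of the $O(1/n)$ error terms, and the uniformity of all constants in $x$ — is routine; note that the Standing Assumption is used only through the two facts $\dist_{\mathcal M}(\mathrm{law}(x_k),\msp_k)<\eta$ and $\osc g_k\le L\eta$.
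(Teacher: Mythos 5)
Your proposal is correct, and while it shares the paper's overall architecture (prove the large-deviations bound first, deduce almost-sure convergence by Borel--Cantelli, and work at the time scale $m$ supplied by the Standing Assumption, splitting indices into the $m$ residue classes mod $m$), the core concentration mechanism is genuinely different. The paper handles the dependence by an explicit coupling: on an enlarged probability space (with auxiliary uniform variables) it constructs, via the monotone optimal transport on the line, \emph{independent} random variables $\xi_j$ with laws $\varphi_*\msp_{r+jm}$ together with Bernoulli markers $\xi'_j$ flagging the rare event $|\varphi(y_j)-\xi_j|>\eps/3$, and then applies the classical large-deviation bound for sums of independent bounded variables; the needed closeness of conditional laws to $\varphi_*\msp_{r+jm}$ is obtained for arbitrary continuous $\varphi$ through the uniform continuity of $\varphi_*:\mM(X)\to\mM([-M,M])$. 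You instead condition at lag $m$, writing $\varphi(x_k)-\E\varphi(x_k)=D_k+E_k$ with $D_k=\varphi(x_k)-\E[\varphi(x_k)\mid\mF_{k-m}]=\varphi(x_k)-g_k(x_{k-m})$, observe that the Standing Assumption applied to Dirac measures makes $\osc g_k$ (and the drift $\E\varphi(x_k)-\int\varphi\,d\msp_k$) deterministically small for Lipschitz $\varphi$, and control the interleaved bounded martingale difference sequences $(D_{r+jm})_j$ by Azuma--Hoeffding with a union bound over residues; all steps check out, including the lag-$m$ martingale property and the uniformity of the constants in $x$ and $n$. What each route buys: yours avoids the somewhat delicate construction of the paper's coupling lemma (enlarged space, atom dissolution, Bernoulli auxiliaries) at the cost of a preliminary Lipschitz approximation of $\varphi$, which is needed because Wasserstein closeness transfers to observables only through a Lipschitz (or uniform-continuity) estimate; the paper's route works with continuous $\varphi$ directly and reduces the problem literally to sums of independent variables with the reference laws $\varphi_*\msp_k$, which makes the comparison with the measures $\msp_k$ very explicit. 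Both yield the same statement with constants independent of the starting point.
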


\begin{remark}
While the Standing Assumption above might seem quite restrictive, it does hold in a natural way in some interesting cases. In the next section we prove that it holds in two settings, namely, for random dynamical systems defined by contractions, also known as random iterated function systems, and for random dynamical systems on a projective space generated by random matrix products.
\end{remark}

\begin{remark}
In the stationary case, when $\mu_i=\mu$ for all $i\in \mathbb{N}$, Standing Assumption implies that the random dynamical system defined by the distribution $\mu$ is uniquely ergodic, i.e. has a unique stationary measure. Notice, however, that the Standing Assumption is actually stronger (i.e. more restrictive) than the unique ergodicity. Namely, unique ergodicity is equivalent to the statement that \emph{time averages} of convoluted images of any initial measure~$\msp$ converge to the unique stationary measure~$\overline{\msp}$ uniformly in~$\msp$:
\[
\frac{1}{m} \sum_{j=0}^m \mgr^{(*j)}*\msp \to \overline{\msp}, \quad m\to \infty,
\]
where $\mgr^{(*j)}$ is $j$-th convolution power. 
Example~\ref{ex:no-average} explains why the Standing Assumption in the non-stationary setting should indeed be stronger.
\end{remark}

\begin{remark}
We would like to point out that there are different settings where non-stationary versions of ergodic theorems could be and were established. For example, the setting where a sequence of maps that preserve the same fixed measure was considered in \cite{BB}. The case when all maps are non-singular with respect to the Lebesgue measure, so the transfer operator technics are applicable, was studied in \cite{CR, HNTV}. Sometimes a term {\it sequential dynamical system} is used in these contexts. We emphasize that both the setting and the underlying mechanisms there are different from ours.
\end{remark}

\section{Examples of applicability}

We will give two examples of non-stationary random dynamical systems for which Standing Assumption is satisfied and, hence, Theorem \ref{t.main} is applicable.
\subsection{Random Iterated Function Systems}

An iterated function system is defined by a finite collection of contractions of a complete metric space, usually $\mathbb{R}^d$ or a compact subset of $\mathbb{R}^d$. In many cases it leads to a fractal attractor. For example, the standard Cantor set can be defined via two contractions of the unite interval, $x\mapsto \frac{x}{3}$ and $x\mapsto \frac{x}{3}+\frac{2}{3}$. Many self-similar fractals can be generated this way. If probabilities are assigned to each of the contractions, one can consider a corresponding random dynamical system. It is known that it must have unique stationary measure \cite{Hu, Sz}. Properties of this stationary measure were a subject of intense studies. There is a vast amount of literature  on exact-dimensionality and fractal dimension of this measure; we will only mention a recent survey \cite{FS}. H\"older regularity of the this measure was studied in \cite[Proposition 2.2]{FL} and  \cite[Section 1.2]{GKM}. For a survey of the theory of Bernoulli convolutions that also fall into this setting see \cite{PSS} or a more recent \cite{Va}.

One can consider a non-stationary random dynamical system generated by contractions, where a different distribution on the space of contraction can be chosen on each step. This setting was considered, for example, in \cite{GKM}; it was shown there that under some mild and natural conditions the averaged iterates of any given initial measure must converge to the space of H\"older regular measures exponentially fast.

One can easily see that in this setting the Standing Assumption holds, and hence, Theorem \ref{t.main} is applicable:

\begin{prop}\label{p.contr}
Let $X$ be a compact metric space, and $\lambda\in (0,1)$ be a constant. Suppose $\{\mu_i\}_{i\in \mathbb{N}}$ be a sequence of probability distributions in the space of contractions $X\to X$ with Lipschitz constant at most $\lambda$. Then Standing Assumption holds, i.e. for any Borel probability measures $\nu, \nu'$ on $X$ we have
$$
\text{\rm dist}_{\mathcal{M}} (\mu_{n+m}*\ldots *\mu_{n+1} *\nu, \mu_{n+m}*\ldots *\mu_{n+1} *\nu')\to 0\ \ \text{\rm as}\ \ \ m\to \infty,
$$
uniformly in $(\nu, \nu')$ and $n\in \mathbb{N}$.
\end{prop}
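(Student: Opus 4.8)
The plan is to prove the stronger quantitative statement that convolving with any one of the $\mu_i$ contracts the Wasserstein distance by the factor $\lambda$, and then simply to iterate. The geometric heart of the matter is the elementary observation that if $f\colon X\to X$ is $\lambda$-Lipschitz then $\dist_{\mathcal{M}}(f_*\nu, f_*\nu')\le \lambda\,\dist_{\mathcal{M}}(\nu,\nu')$ for all $\nu,\nu'\in\mathcal{M}$: given any coupling $\gamma$ of $\nu$ and $\nu'$, the pushforward $(f\times f)_*\gamma$ is a coupling of $f_*\nu$ and $f_*\nu'$ whose cost is $\iint d_X(f(x),f(y))\,d\gamma(x,y)\le\lambda\iint d_X(x,y)\,d\gamma(x,y)$, and taking $\gamma$ optimal gives the bound.

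Next I would upgrade this to the convolution $\nu\mapsto\mu_i*\nu$. Recall that $\mu_i*\nu$ is the law of $f(x)$ for independent $f\sim\mu_i$ and $x\sim\nu$. Fixing an optimal coupling $\gamma$ of $\nu$ and $\nu'$, let $\Gamma$ be the law of $(f(x),f(y))$ where $(x,y)\sim\gamma$ and $f\sim\mu_i$ are drawn independently; then $\Gamma$ is a coupling of $\mu_i*\nu$ and $\mu_i*\nu'$, so
$$
\dist_{\mathcal{M}}(\mu_i*\nu,\mu_i*\nu')\le\iint_{X\times X}\!\!\int d_X(f(x),f(y))\,d\mu_i(f)\,d\gamma(x,y)\le\lambda\iint_{X\times X} d_X(x,y)\,d\gamma(x,y)=\lambda\,\dist_{\mathcal{M}}(\nu,\nu'),
$$
where the second inequality uses that $\mu_i$ is supported on maps with Lipschitz constant at most $\lambda$. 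The one point that genuinely needs care here — and the only place I anticipate any friction in the whole argument — is the measure-theoretic bookkeeping: that $f\mapsto f_*\gamma$ is measurable and that $\Gamma$ is a well-defined Borel probability measure with the asserted marginals. This is routine once one notes that $(f,x)\mapsto f(x)$ is jointly measurable (indeed continuous on $C(X,X)\times X$) and applies Fubini; I would state it and move on rather than belabor it.

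Finally, composing the contraction estimate $m$ times yields, for every $n\in\mathbb{N}$ and all $\nu,\nu'\in\mathcal{M}$,
$$
\dist_{\mathcal{M}}(\mu_{n+m}*\ldots*\mu_{n+1}*\nu,\ \mu_{n+m}*\ldots*\mu_{n+1}*\nu')\le\lambda^m\,\dist_{\mathcal{M}}(\nu,\nu')\le\lambda^m\,\diam(X).
$$
Since $X$ is compact we have $\diam(X)<\infty$, and since $\lambda\in(0,1)$ the right-hand side tends to $0$ as $m\to\infty$, uniformly in $n$ and in the pair $(\nu,\nu')$. This is precisely the Standing Assumption — given $\delta>0$, pick $m$ with $\lambda^m\,\diam(X)<\delta$ — which completes the proof.
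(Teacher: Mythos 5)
Your proof is correct and follows essentially the same route as the paper: the key estimate $\dist_{\mathcal{M}}(\mu_i*\nu,\mu_i*\nu')\le\lambda\,\dist_{\mathcal{M}}(\nu,\nu')$, obtained by pushing forward an optimal coupling under $\lambda$-Lipschitz maps, is exactly the paper's one-line argument, which you then iterate and bound by $\lambda^m\diam(X)$. The extra care you take with the coupling construction and measurability is a more detailed write-up of the same idea, not a different approach.
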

\begin{proof}
Indeed, for any $\mu_i$, any map $f\in \text{supp}\, \mu_i$ is a contraction with Lipschitz constant at most $\lambda$. Hence for any $\nu_1, \nu_2\in \mathcal{M}$ we have
$$
\text{\rm dist}_{\mathcal{M}} (\mu*\nu_1, \mu*\nu_2)\le \lambda \text{\rm dist}_{\mathcal{M}} (\nu_1, \nu_2),
$$
and Proposition \ref{p.contr} follows.
\end{proof}

\subsection{Random Matrix Products}

Let us now show that under some non-degeneracy assumptions a non-stationary random dynamical system defined by projective maps must satisfy Standing Assumption.

Suppose $\bK$ is a compact set in the space of probability distributions on $SL(2, \mathbb{R})$  such that for any $\mu\in \bK$ {\it the measures condition} is satisfied (we use terminology from~\cite{GKM}), i.e. there are no probability measures $\nu_1, \nu_2$ on $\mathbb{RP}^1$ such that $(f_A)_*(\nu_1)=\nu_2$ for all  $A\in \text{\rm supp}\,\mu$. Slightly abusing the notation, we will treat $\mu$ also as a measure on the space of projective maps $f_A:\mathbb{RP}^1\to \mathbb{RP}^1$.

\begin{prop}\label{p.proj}
Suppose the measure condition holds for each $\mu\in \bK$. Then for any sequence $\{\mu_i\}\in \bK^{\mathbb{N}}$ and any probability measures $\nu, \nu'\in \mathcal{M}$ we have:
$$
\text{\rm dist}_{\mathcal{M}} (\mu_{n+m}*\ldots *\mu_{n+1} *\nu, \mu_{n+m}*\ldots *\mu_{n+1} *\nu')\to 0\ \ \text{\rm as}\ \ \ m\to \infty,
$$
uniformly in $(\nu, \nu')$ and $n\in \mathbb{N}$.
\end{prop}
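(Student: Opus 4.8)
The plan is to combine two facts, each of which must hold \emph{uniformly} over the compact family $\bK$: (a) a matrix of large norm crushes all of $\mathbb{RP}^1$, except a small neighbourhood of its most contracted direction, into a tiny arc; and (b) by \cite[Theorem 2.8]{GKM} (extended to the compact family $\bK$), under the measure condition the averaged images $\mu_{n+m}*\cdots*\mu_{n+1}*\nu$ converge, uniformly and exponentially fast, to the set $\mathcal{H}$ of $(\alpha,C_0)$-H\"older-regular measures (those with $\rho(B(z,r))\le C_0 r^\alpha$ for all $z,r$) for suitable $\alpha,C_0>0$.

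Make (a) quantitative with the metric $d_X(x,y)=|\sin\angle(v,w)|$ on $\mathbb{RP}^1$ ($v,w$ unit representatives), which is bi-Lipschitz to the original one and satisfies $d_X(f_A x,f_A y)=d_X(x,y)/(\|Av\|\,\|Aw\|)$ together with $\|Av\|\ge\|A\|\,d_X([v],a(A))$, where $a(A)\in\mathbb{RP}^1$ denotes the direction most contracted by $A$. Write $G=f_{A_{n+m}}\circ\cdots\circ f_{A_{n+1}}$, identified with the corresponding matrix product. By the non-stationary Furstenberg theorem under the measure condition \cite{GK2}, $\|G\|\ge e^{\lambda_0 m/2}$ with probability $\ge 1-Ce^{-\delta m}$, where $\lambda_0,C,\delta>0$ are uniform over $\{\mu_i\}\in\bK^{\mathbb N}$ thanks to compactness of $\bK$. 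On this event any two points at distance $\ge t$ from $a(G)$ are carried by $G$ to within $e^{-\lambda_0 m}t^{-2}\diam X$ of each other. Coupling two measures $\rho,\rho'\in\mathcal{H}$ by $\mathbb{E}_A\!\left[\int\delta_{(Gx,Gy)}\,d(\rho\otimes\rho')\right]$, estimating the exceptional set $\{x\ \text{or}\ y\ \text{within}\ t\ \text{of}\ a(G)\}$ by $2C_0 t^\alpha$ in measure and by $\diam X$ in cost, optimizing in $t$, and adding the bad event, one gets
\[
\dist_{\mathcal{M}}\big(\mu_{n+m}*\cdots*\mu_{n+1}*\rho,\ \mu_{n+m}*\cdots*\mu_{n+1}*\rho'\big)\ \le\ \varepsilon(m),\qquad\varepsilon(m)\to 0,
\]
uniformly in $n$, in $\{\mu_i\}$, and over $\rho,\rho'\in\mathcal{H}$.

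It remains to reduce the general case to the case $\rho,\rho'\in\mathcal{H}$. Given $\eta>0$, pick $m_1$ with $\varepsilon(m_1)<\eta/3$. For a block of $m_1$ maps and arbitrary $a,b\in\mathcal{M}$, using $d_X(Gx,Gy)\le\|G\|^2 d_X(x,y)$ when $\|G\|\le R$ and $d_X(Gx,Gy)\le\diam X$ otherwise, a coupling bounds the distance between the two images by $R^2\dist_{\mathcal{M}}(a,b)+\diam X\cdot\sup_{\bK^{m_1}}\Prob(\|G\|>R)$; since a weak-$*$ compact $\bK$ is tight (Prokhorov), the last term tends to $0$ as $R\to\infty$, so fix $R$ making it $<\eta/12$, then $\eta'>0$ with $R^2\eta'<\eta/12$, and finally, by (b), $m_2$ with: after $\ge m_2$ steps every averaged image lies within $\eta'$ of $\mathcal{H}$. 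Put $m_0=m_1+m_2$. For $m\ge m_0$, split the $m$ maps into the first $m-m_1\ (\ge m_2)$ and the last $m_1$; after the first block both partial images lie within $\eta'$ of some $\rho,\rho'\in\mathcal{H}$, and applying the last $m_1$ maps and the triangle inequality bounds the desired distance by $2\big(R^2\eta'+\diam X\cdot\sup\Prob(\|G\|>R)\big)+\varepsilon(m_1)<\eta$. This proves the Proposition.

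The only non-routine points are the two uniform inputs above: a non-stationary Furstenberg theorem providing a positive Lyapunov exponent with exponential large deviations, and a uniform H\"older-regularity estimate for averaged images --- and this is exactly where the measure condition, together with compactness of $\bK$, enters. The rest --- the crushing estimate, the tightness bound, and the bookkeeping --- is soft; in particular no moment condition on the measures in $\bK$ is required, only their tightness.
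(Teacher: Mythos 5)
Your reduction scheme (crush everything outside a small neighbourhood of the most contracted direction, control the mass of that neighbourhood, then handle arbitrary initial measures by a preliminary block of iterations) is sound \emph{given} your two ``uniform inputs'', but those inputs are exactly where the proof breaks. The hypothesis of the proposition is only that $\bK$ is compact in the space of probability measures on $\SL(2,\R)$ (weak-* compactness, i.e.\ tightness) and that each $\mu\in\bK$ satisfies the measures condition. Neither the non-stationary Furstenberg theorem with exponential large deviations (\cite[Theorems 1.1, 1.4]{GK2}) nor the H\"older-regularity statement \cite[Theorem 2.8]{GKM} is available under these hypotheses alone: both require uniform moment bounds (of the type $\int\|A\|\,d\mu\le C_{\bK}$, or exponential moments for the H\"older regularity), and weak-* compactness does not give any uniform integrability of $\log\|A\|$, let alone of $\|A\|$. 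Your closing remark that ``no moment condition on the measures in $\bK$ is required, only their tightness'' is precisely the unjustified step: with it removed, the claimed bound $\Prob(\|G\|\ge e^{\lambda_0 m/2})\ge 1-Ce^{-\delta m}$ uniform over $\bK^{\N}$, and the uniform $(\alpha,C_0)$-regularity of the averaged images, are assertions you have not proved and cannot import from the cited results.

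This is not a cosmetic issue: the paper itself treats your situation as the ``easy case''. Its proof of the key Proposition~\ref{p.nF} first assumes a uniform moment bound~\eqref{e.7} and then argues essentially as you do (norm growth with large deviations forces contraction of pairs of points), but the bulk of the proof is devoted to the general case without moments, via a genuinely different mechanism: a backward sequence of measures $\msp^-_i$ obtained by convolving inverse maps starting from Lebesgue, the martingale property of $\msp^-_i(F(J))$, the Atoms Dissolving Theorem plus compactness to get uniform non-atomicity (Lemma~\ref{l:eps-1}), and finally a compactness/contradiction argument showing that if pairs of points failed to approach each other with uniform probability, the matrix products along the limiting sequence would have uniformly bounded norms, which lands one back in the moment-bounded case and contradicts the non-stationary Furstenberg theorem. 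To repair your argument you would either have to add a uniform moment hypothesis to the proposition (weakening it) or supply a replacement for the two uniform inputs valid under tightness alone, which is essentially the content of the paper's longer argument.
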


{We will deduce this proposition from the following statement, that is also of independent interest:}
\begin{prop}\label{p.nF}
{Suppose the measure condition holds for each $\mu\in \bK$. Then for any $\eps>0$ there exists $m$ such that for any measures $\mgr_1,\dots,\mgr_m\in\bK$ and any points $x,y\in\R\P^1$ one has
\[
(\mgr_m*\dots*\mgr_1)\{F \mid d(F(x),F(y))<\eps \} > 1-\eps,
\]
 where the composition $F=f_m\circ\dots\circ f_1$ is formed by independent random maps $f_i$ distributed w.r.t. $\mgr_i$. In other words, the random composition $F$ brings $x$ and $y$ to the distance less than~$\eps$ with probability at least~$1-\eps$.
}
\end{prop}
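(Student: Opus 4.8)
The plan is to exploit the classical Furstenberg-type mechanism adapted to the non-stationary setting: under the measure condition, a single random step cannot simultaneously preserve a pair of measures, and iterating this produces genuine contraction on $\RP^1$ in probability. I would first reformulate the statement in terms of contraction of the diameter of a two-point set, or better, in terms of the ``distance to being a Dirac mass'' for the pushforward of a suitable auxiliary measure. Concretely, for a pair $(x,y)$ consider the measure $\tfrac12(\delta_x+\delta_y)$ on $\RP^1$; the quantity I want to control is essentially $\E[d(F(x),F(y))]$ under $\mgr_m*\dots*\mgr_1$, and Markov's inequality then converts a bound on the expectation into the probabilistic statement with $\eps$.

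First I would establish a one-step ``strict contraction on average'' estimate that is uniform over $\bK$. The key point is a compactness argument: the map sending $\mu\in\bK$ and a pair of measures $(\nu_1,\nu_2)$ on $\RP^1$ (in fact, it suffices to take $\nu_1=\delta_x$, $\nu_2=\delta_y$, or pairs of Dirac masses, but one must be slightly careful) to $\dist_{\mathcal M}(\mu*\nu_1,\mu*\nu_2)$ is continuous, and the measure condition guarantees that one cannot have $\dist_{\mathcal M}(\mu*\nu_1,\mu*\nu_2)=\dist_{\mathcal M}(\nu_1,\nu_2)$ with equality persisting; however, since projective maps are only non-expanding (not contracting) for the round metric, a single step need not strictly decrease the distance. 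The correct statement to aim for is that there exist $m_0\in\N$ and $\theta<1$, uniform over finite sequences in $\bK$, such that averaging over $m_0$ steps contracts: $\E_{\mgr_{m_0}*\dots*\mgr_1}[d(F(x),F(y))]\le \theta\, d(x,y)$ — no, this still fails because of the additive nature; rather one wants $\E[d(F(x),F(y))]\le \theta$ for all $x,y$ once $m_0$ is large enough, i.e. a ``forgetting'' bound independent of the initial distance. I would prove this by contradiction and compactness: if no such $m_0$, $\theta$ worked, one would extract (via compactness of $\bK$ and of $\RP^1$) a limiting sequence of measures $\mu_i^\infty\in\bK$ and points for which the averaged images of $\delta_x$ and $\delta_y$ stay a definite distance apart for all times, and then a further limiting/accumulation argument would produce a pair of measures violating the measure condition for some $\mu\in\bK$ (using that the measure condition is a closed condition on $\bK$, or more precisely that its negation is).

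Once the ``uniform forgetting after $m_0$ steps'' estimate is in hand — say $\E_{\mgr_{m_0}*\dots*\mgr_1}[d(F(x),F(y))] < \eps^2$ for all $x,y\in\RP^1$ and all $\mgr_1,\dots,\mgr_{m_0}\in\bK$, with $m_0=m_0(\eps)$ — the proposition follows immediately: take $m=m_0$, and Markov's inequality gives $(\mgr_m*\dots*\mgr_1)\{F : d(F(x),F(y))\ge\eps\}\le \E[d(F(x),F(y))]/\eps < \eps$. Then Proposition~\ref{p.proj} is deduced by a routine argument: applying Proposition~\ref{p.nF} to the pushforwards of $\nu$ and $\nu'$ and integrating — couple $\nu$ and $\nu'$ arbitrarily, push a random point-pair through $F$, and use that with probability $>1-\eps$ the images are $\eps$-close while the diameter of $\RP^1$ is bounded — yields $\dist_{\mathcal M}(\mu_{n+m}*\dots*\mu_{n+1}*\nu, \mu_{n+m}*\dots*\mu_{n+1}*\nu') < \eps + \eps\cdot\diam(\RP^1)$, which is small, uniformly in $n$ and in the pair $(\nu,\nu')$; iterating over blocks of length $m$ pushes this to $0$ as $m\to\infty$.

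The main obstacle I anticipate is the compactness/contradiction step establishing uniform forgetting: one must argue carefully that the measure condition, which is stated pointwise for each $\mu\in\bK$, upgrades to a uniform quantitative contraction over the whole compact family $\bK$ and over arbitrary non-stationary concatenations. The subtlety is that the ``obstruction'' to contraction after $m$ steps lives in a space of pairs of measures together with a sequence $(\mu_1,\dots,\mu_m)$, and as $m\to\infty$ one needs a compactness principle (Banach–Alaoglu / weak-$*$ sequential compactness of $\mathcal M(\RP^1)$ together with compactness of $\bK$) robust enough to extract, from a hypothetical family of ``bad'' configurations, a single $\mu\in\bK$ and a genuine invariant pair $(\nu_1,\nu_2)$ with $(f_A)_*\nu_1=\nu_2$ for all $A\in\supp\mu$, contradicting the measure condition. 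Handling the bookkeeping of which step the obstruction ``concentrates'' at — and ensuring the limiting measures are actually related by a single $\mu$ rather than smeared across the sequence — is where the real care is needed; a Furstenberg-style argument using the martingale/backward-limit structure of the random products, or an explicit use of the fact that non-expanding projective maps that fail to contract must fix a common configuration, is the natural tool here.
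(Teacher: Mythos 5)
Your outer reduction is fine and matches the paper in spirit: bounding $\E\bigl[d(F(x),F(y))\bigr]$ by $\eps^2$ uniformly in $x,y$ and in the word $\mgr_1,\dots,\mgr_m\in\bK$, then applying Markov's inequality, does give the proposition, and your deduction of Proposition~\ref{p.proj} from it is essentially the paper's. The problem is that the entire content of the proposition sits in the ``uniform forgetting after $m_0$ steps'' estimate, and the argument you offer for it --- contradiction plus weak-$*$ compactness of $\mathcal M(\RP^1)$ and compactness of $\bK$ --- does not go through. Negating the estimate gives, for each $m$, a bad word $(\mgr_{1,(m)},\dots,\mgr_{m,(m)})$ and a pair of points; a diagonal extraction produces an \emph{infinite} sequence $\mgr_1,\mgr_2,\dots\in\bK$ along which the two random orbits stay a definite distance apart with non-negligible probability forever. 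To contradict the hypothesis you must convert this into a \emph{single} $\mu\in\bK$ and a pair of measures $\nu_1,\nu_2$ with $(f_A)_*\nu_1=\nu_2$ for all $A\in\supp\mu$; in the stationary case that conversion is done through the stationary measure and its backward martingale limits, and no such object exists here. You flag this yourself (``ensuring the limiting measures are actually related by a single $\mu$ rather than smeared across the sequence''), but flagging it is not closing it: this implication is, in substance, the non-stationary Furstenberg theorem, which is a quantitative theorem proved elsewhere, not a soft compactness fact.

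Concretely, the paper does not attempt your soft argument. It imports the machinery of~\cite{GK2}: Theorems~1.1 and~1.4 there (uniform linear growth of $L_n=\E\log\|A_n\cdots A_1\|$ and uniform large deviations) settle the case when $\int\|A\|\,d\mgr$ is bounded on $\bK$, by showing the random product is huge with overwhelming probability, hence maps both points near the image of the most expanded direction. Since the proposition assumes no moment condition, the general case then requires further work: a martingale of backward-convolved ``inverse'' measures $\msp_i^-$ started from Lebesgue at time $n$ (Lemma~\ref{l:inv-meas}), uniform non-atomicity of these measures via the Atoms Dissolving Theorem~1.13 of~\cite{GK2} plus compactness (Lemma~\ref{l:eps-1}), and finally a dichotomy: either two orbits come $\eps_1$-close with uniformly positive probability in a bounded number of steps (then repeated trials finish the proof), or a compactness extraction forces the matrix norms along the bad sequence to be uniformly bounded, throwing you back into the moment-bounded case and a contradiction with the non-stationary Furstenberg theorem. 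None of this structure --- the inverse-measure martingale, the atom dissolving, the norm-boundedness dichotomy replacing the missing moment hypothesis --- appears in your proposal, and without it (or an equivalent genuinely new argument) the key uniform contraction estimate remains unproven.
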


This deduction is indeed immediate:

\begin{proof}[Proof of Proposition~\ref{p.proj}]
Note first that
\begin{multline*}
\text{\rm dist}_{\mathcal{M}} (\mgr_{n+m}*\ldots *\mgr_{n+1} *\msp, \mgr_{n+m}*\ldots *\mgr_{n+1} *\msp')\le
\\
\iint_{\mathbb{RP}^1} \dist_{\mathcal{M}} (\mgr_{n+m}*\ldots *\mgr_{n+1} *\delta_x, \mgr_{n+m}*\ldots *\mgr_{n+1} *\delta_y)\, d\nu(x) d\nu'(y);
\end{multline*}
this follows from the convexity of the transport distance: mixing the couplings provides a coupling for the averaged measures.

At the same time, we have
\begin{multline}\label{eq:dF}
\text{\rm dist}_{\mathcal{M}} (\mgr_{n+m}*\ldots *\mgr_{n+1} *\delta_x,
\mgr_{n+m}*\ldots *\mgr_{n+1} * \delta_y)\le \\
\le
\int d(F(x), F(y)) \, d(\mgr_{n+m}*\ldots *\mgr_{n+1})(F).
\end{multline}
Take an arbitrary $\eps>0$; once $m$ is taken sufficiently large so that the conclusion of Proposition~\ref{p.nF} holds, the right hand side of~\eqref{eq:dF} does not exceed
\begin{multline*}
\eps \cdot  (\mgr_{n+m}*\dots*\mgr_{n+1})\{F \mid d(F(x),F(y))<\eps \} +
\\+ 1 \cdot  (\mgr_{n+m}*\dots*\mgr_{n+1})\{F \mid d(F(x),F(y))\ge \eps \} \le
\eps \cdot 1 + 1 \cdot \eps = 2\eps.
\end{multline*}
As $\eps>0$ is arbitrary, we get the desired convergence that is uniform in~$n$, $\nu$, and $\nu'$.
\end{proof}

Let us now return to Proposition~\ref{p.nF}. Before proceeding to the proof, note that if the dynamics was stationary and if the expectation of $\log \|A\|$ was bounded, the conclusion of the proposition would be implied by the famous Furstenberg Theorem. This motivates to approach it via a non-stationary version of Furstenberg Theorem. Such versions were studied recently in~\cite{GK2} and~\cite{Gold}.

\begin{proof}[Proof of Proposition~\ref{p.nF}]
Assume that the expectation of $\|A\|$ is bounded uniformly in $\mgr\in\bK$:
\begin{equation}\label{e.7}
\exists C_\bK: \quad \forall \mgr\in \bK \quad \int \|A\| \, d\mgr(A) < C_\bK.
\end{equation}
Then we are in a situation when Theorems~1.1 and~1.4 from~\cite{GK2} are applicable. Namely, denote
\[
L_n:=\E \log \|A_n\dots A_1\|.
\]
Theorem~1.1 from~\cite{GK2} then implies a linear lower bound for $L_n$: there exists $\lambda_{\bK}>0$ such that
\[
L_n \ge n \lambda_{\bK};
\]
moreover, $\lambda_{\bK}$ does not depend on the individual choices of $\mgr_i$, but only on the compact~$\bK$ itself.

Now, Theorem~1.4 from~\cite{GK2} states that a Large Deviations type estimate holds: for any fixed $\delta>0$, there exists $\delta'$ such that for any $v_0\in\R^2$, $|v_0|=1$, and any sufficiently large $n$ (that is, for all $n$ greater than some $n_0$) one has
\[
\Prob(\left|\log |A_n\dots A_1 v_0| - L_n\right| > n\delta ) < e^{-n \delta'}.
\]
Moreover (see~\cite[{Remark~1.4}]{GK2}), the constants $\delta'$ and $n_0$ can be chosen uniformly in all possible sequences of~$\mgr_i\in\bK$.

To establish the conclusion of the proposition in this particular case, i.e. when (\ref{e.7}) holds,  take $\delta:=\frac{1}{2}\lambda_{\bK}>0$ and consider the corresponding~$n_0$,~$\delta'$. Then, once the inequality
\begin{equation}\label{eq:gr-A}
\left|\log |A_n\dots A_1 v_0| - L_n\right| <n\delta
\end{equation}
holds, it implies
\[
\log |A_n\dots A_1 v_0| > L_n - n \delta \ge \frac{1}{2} \lambda_{\bK} n.
\]
In particular, for any given $\eps>0$, taking $n_1$ to be the integer part of $\frac{\log 2\eps^{-1}}{\frac{1}{2} \lambda_{\bK} n}$, we have
\[
|A_n\dots A_1 v_0| > 2\eps^{-1}
\]
once $n>n_1$ and~\eqref{eq:gr-A} holds. In turn, this inequality implies that the angle between the image $A_n\dots A_1 v_0$ and the image of the most expanded unit vector does not exceed $\frac{\eps}{2}$. Hence, having it for two vectors $v_0,v_0'$ corresponding to two points $x,y\in\R\P^1$ implies that
\begin{equation}\label{eq:dist-F}
\dist (F(x),F(y))<\eps,
\end{equation}
where $F=f_{A_n\dots A_1}$. On the other hand, for any $n>n_2:=\lceil \frac{1}{\delta'} \log 2\eps^{-1} \rceil$ one has
\[
e^{-n\delta'}<\frac{\eps}{2}.
\]

Hence, for $n>\max(n_0,n_1,n_2)$ we get the desired~\eqref{eq:dist-F} with the probability at least
\[
1-2e^{-n\delta'} >1-2\frac{\eps}{2} = 1-\eps
\]
for arbitrary $\mgr_1,\dots,\mgr_n\in \bK$ and $x,y\in\R\P^1$.

Now, let us pass to the general case. The following tool is useful to study random dynamical systems on the circle. Consider an inverse stationary measure, that is, a measure $\msp^-$ such that
\[
\msp^- = \E (f^{-1})_* \msp^-.
\]
Then  the measures of forward random iterations of an interval  form a martingale:
\[
\E \msp^- ( f(J)) = \E ((f^{-1})_* \msp^-) (J) = \msp^- (J).
\]

In the non-stationary setting the notion of \emph{one} stationary measure is no longer applicable. Thus, similarly to the statement of  Theorem~\ref{t.main}, we will replace this notion by a sequence of measures related by convolution. Namely, for every given $n$ and measures $\mgr_1,\dots,\mgr_n\in\bK$, we define
\[
\msp^-_n:=\Leb, \quad \forall i=1,\dots,n \quad \msp^-_{i-1}:= \E_{\mgr_{i}}  (f^{-1})_* \msp^-_i = \mgr_{i}^- *  \msp^-_i,
\]
where the measure $\mgr^-_i$ is the push-forward image of $\mgr_i$ under the inversion map $f\mapsto f^{-1}$. Then we also have the martingale property for the measures of the iterations: for any $i$ and any interval $J$ one has
\begin{equation}
\E_{\mgr_i}\, \msp^-_i (f(J)) =  \msp^-_{i-1} (J),
\end{equation}
and thus
\begin{equation}
\E_{\mgr_n*\dots*\mgr_i}\, \msp^-_n (F(J)) =  \msp^-_{i-1} (J).
\end{equation}

For any given $x,y\in\R\P^1$ consider the sequence of their random iterations: let $f_i$ be chosen independently w.r.t. $\mgr_i$, and denote
\[
x_0:=x, \, y_0:=y, \quad x_{i}=f_i(x_{i-1}), \, y_{i}=f_i(y_{i-1}).
\]
We then have the following lemma.
\begin{lemma}\label{l:inv-meas}
Assume that for some $i$ after $i$ iterations we have $\msp^-_i([x_i,y_i])<\frac{1}{2}\eps^2$ or $\msp^-_i([y_i,x_i])<\frac{1}{2}\eps^2$. Then with the probability at least $1-\frac{\eps}{2}$ we have
\[
\dist(x_n,y_n) < \eps.
\]
\end{lemma}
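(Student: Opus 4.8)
The plan is to exploit the martingale property of the measures $\msp^-_i(F(J))$ together with a standard estimate for the supremum of a nonnegative martingale. Suppose without loss of generality that after $i$ steps we have $\msp^-_i([x_i,y_i]) < \tfrac12\eps^2$ (the other case being symmetric, with $[y_i,x_i]$ in place of $[x_i,y_i]$). Write $J_i := [x_i,y_i]$ for the arc joining the two iterates, and for $j \ge i$ let $J_j := [x_j, y_j] = f_j\circ\dots\circ f_{i+1}(J_i)$ be its image; set $Z_j := \msp^-_j(J_j)$. By the martingale relation $\E_{\mgr_j}\, \msp^-_j(f(J)) = \msp^-_{j-1}(J)$ applied along the orbit, the sequence $(Z_j)_{j\ge i}$ is a nonnegative martingale with respect to the filtration generated by $f_{i+1}, f_{i+2}, \dots$, and $Z_i = \msp^-_i(J_i) < \tfrac12\eps^2$ by hypothesis.

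Next I would invoke the maximal inequality for nonnegative martingales (Doob / Ville): for a nonnegative martingale $(Z_j)$ one has, for any threshold $a>0$,
\[
\Prob\left( \sup_{j\ge i} Z_j \ge a \right) \le \frac{Z_i}{a}.
\]
Taking $a = \eps/2$ gives
\[
\Prob\left( \sup_{j\ge i} \msp^-_j([x_j,y_j]) \ge \tfrac{\eps}{2}\right) \le \frac{\tfrac12\eps^2}{\tfrac{\eps}{2}} = \eps,
\]
but one should be slightly more careful: we want probability at least $1-\tfrac{\eps}{2}$, so instead take $a = \eps$, yielding $\Prob(\sup_j Z_j \ge \eps) \le \tfrac12\eps$. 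Thus with probability at least $1-\tfrac{\eps}{2}$ we have $\msp^-_j([x_j,y_j]) < \eps$ for all $j\ge i$, and in particular $\msp^-_n([x_n,y_n]) < \eps$ at the terminal time $n$.

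The final step is to translate smallness of $\msp^-_n$-measure into smallness of actual distance on $\R\P^1$. Since $\msp^-_n = \Leb$ is the (normalized) Lebesgue measure on the circle $\R\P^1$, the condition $\Leb([x_n,y_n]) < \eps$ says precisely that the arc between $x_n$ and $y_n$ has length less than $\eps$ (up to the normalization constant, which can be absorbed into $\eps$ by rescaling at the outset), hence $\dist(x_n,y_n) < \eps$. This completes the deduction.

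The main subtlety — really the only place requiring care — is the bookkeeping of constants and the choice of which arc to track: one must fix the orientation of $[x_i,y_i]$ at the moment the hypothesis holds and follow its forward images consistently, making sure the maps $f_j$ are orientation-preserving homeomorphisms of $\R\P^1$ (true for projective maps from $\mathrm{SL}(2,\R)$) so that the arc structure and its $\msp^-_j$-measure evolve as a genuine martingale rather than, say, the arc flipping to its complement. A secondary point is justifying that the Doob maximal inequality applies at the terminal index $n$ — since we only need the bound at time $n$ and not a true supremum, even the one-sided estimate $\Prob(Z_n \ge \eps) \le Z_i/\eps$ from the martingale property alone suffices, so the full maximal inequality is a convenience rather than a necessity.
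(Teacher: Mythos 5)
Your argument is correct and is essentially the paper's: the paper conditions on $f_1,\dots,f_i$, uses the martingale identity $\E\bigl(\msp^-_n([x_n,y_n])\mid f_1,\dots,f_i\bigr)=\msp^-_i([x_i,y_i])<\frac{\eps^2}{2}$, and applies Markov's inequality directly at the terminal time $n$ --- exactly the simplification you note at the end, so the Doob--Ville maximal inequality is an unnecessary detour rather than a different route. The only cosmetic difference is that the paper identifies $\dist(x_n,y_n)=\Leb([x_n,y_n])=\msp^-_n([x_n,y_n])$ outright, so there is no normalization constant to absorb.
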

\begin{proof}
Assume that we are in the first case. Then as
\[
\E ( \msp^-_n([x_n,y_n]) \mid f_1,\dots, f_i) = \msp^-_i([x_i,y_i]) < \eps \cdot \frac{\eps}{2},
\]
due to the Markov inequality with the probability at least $1-\frac{\eps}{2}$ we have
\[
\dist(x_n,y_n)=\Leb([x_n,y_n])=\msp^-_n([x_n,y_n]) < \eps.
\]
The case  $\msp^-_i([y_i,x_i])<\frac{1}{2}\eps^2$ can be treated in the same way.
\end{proof}

Roughly speaking, Lemma~\ref{l:inv-meas} says that if the random iterations of $x$ and $y$ approach each other in the sense of the measure $\msp^-_i$, they most probably will stay sufficiently close to each other in the usual sense at time~$n$, too. The next observation is that measures $\msp^-_i$ are non-atomic, uniformly in both $i$ and possible choices of measures~$\mgr_j$. Namely, we have the following lemma.
\begin{lemma}\label{l:eps-1}
There exists $\eps_1>0$ such that for any $n,i$ and $\mgr_1,\dots,\mgr_n$ and any interval $J$ of length $|J|<\eps_1$ we have $\msp^-_i(J)<\frac{1}{2} \eps^2$.
\end{lemma}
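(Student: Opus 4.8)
The assertion is a uniform non-atomicity of the family $\{\msp^-_i\}$, so the plan is to find a single modulus that fits all of them: a nondecreasing $\omega\colon[0,1]\to[0,1]$ with $\omega(\ell)\to0$ as $\ell\to0^+$ and
\[
\msp^-_i(J)\le\omega(|J|)\qquad\text{for every interval }J\subset\RP^1,\ \text{every }n,\ \text{every }i\le n,\ \text{every }\mgr_1,\dots,\mgr_n\in\bK;
\]
Lemma~\ref{l:eps-1} then holds with any $\eps_1$ for which $\omega(\eps_1)<\tfrac12\eps^2$. Since $\msp^-_n=\Leb$, $\msp^-_{i-1}=\mgr_i^-*\msp^-_i$, and $(\mgr^-*\msp)(J)=\int\msp\big(f_A(J)\big)\,d\mgr(A)$, it is enough to produce $\omega$ with $\omega(\ell)\ge\ell$ for which the set $\mathcal G_\omega:=\{\msp\in\mM(\RP^1):\msp(I)\le\omega(|I|)\ \text{for all intervals}\ I\}$ is invariant under $\msp\mapsto\mgr^-*\msp$ for every $\mgr\in\bK$: then $\Leb\in\mathcal G_\omega$, and descending induction on $i$ gives $\msp^-_i\in\mathcal G_\omega$ for all $i$. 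Written out, this invariance is exactly the requirement that $\omega$ be a super-fixed point of the monotone operator
\begin{equation}\label{eq:superfix}
(T\omega)(\ell):=\sup_{\mgr\in\bK}\ \sup_{|J|=\ell}\ \int\omega\big(|f_A(J)|\big)\,d\mgr(A)\ \le\ \omega(\ell),\qquad\ell\in(0,1].
\end{equation}
So the whole problem reduces to constructing a solution of~\eqref{eq:superfix} that vanishes at $0$.

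Two ingredients go into such a construction. The first is tightness: $\bK$ being weak-$*$ compact, for every $\eta>0$ there is $R(\eta)$ with $\mgr(\{\|A\|>R(\eta)\})<\eta$ for all $\mgr\in\bK$; in the general case this takes the place of the uniform bound $\int\|A\|\,d\mgr<C_\bK$ that is no longer assumed. Together with the elementary fact that the derivative of $f_A$ on $\RP^1$ is everywhere $\le\|A\|^2$, hence $|f_A(J)|\le\min(\|A\|^2|J|,1)$, tightness yields a concave nondecreasing $\psi$ with $\psi(0^+)=0$, the same for all $\mgr\in\bK$, such that $\int|f_A(J)|\,d\mgr(A)\le\psi(|J|)$; this controls~\eqref{eq:superfix} at small scales and shows, by iteration, that over a \emph{bounded} number of steps no interval of length $o(1)$ can collect a definite mass. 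By itself it is insufficient: $\psi(\ell)>\ell$ for small $\ell$ because projective maps do expand, so the only bound it gives over $j$ steps, $\msp^-_{n-j}(J)\le\psi^{\circ j}(|J|)$, degenerates to $1$ as $j\to\infty$. The contraction must come from the measure condition.

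The second ingredient is a uniform version of the measure condition obtained by a soft compactness argument. Put
\[
D(\mgr,\msp_1):=\inf_{\msp_2\in\mM(\RP^1)}\ \int\dist_{\mM}\big((f_A)_*\msp_1,\,\msp_2\big)\,d\mgr(A).
\]
As $\dist_{\mM}$ is bounded on $\mM(\RP^1)$ and $A\mapsto(f_A)_*\msp_1$ is continuous, $D$ is continuous on the compact space $\bK\times\mM(\RP^1)$; and $D(\mgr,\msp_1)>0$ for every pair, because $D(\mgr,\msp_1)=0$ would give $\msp_2$ with $(f_A)_*\msp_1=\msp_2$ for all $A\in\supp\mgr$, contradicting the measure condition. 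Hence $D\ge\delta_1$ for a constant $\delta_1>0$ depending only on $\bK$: after one step the images $(f_A)_*\msp_1$ are $\delta_1$-dispersed around their barycentre, uniformly in $\mgr\in\bK$ and in $\msp_1$. This is the mechanism that prevents a definite amount of mass from persisting on a short interval under $\mgr^-*(\cdot)$, since persistence would force the relevant one-step images to nearly coincide.

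The heart of the matter — and the step I expect to be the main obstacle — is to turn $D\ge\delta_1$ into a quantitative, \emph{uniform} non-atomicity, i.e.\ to prove that $T\omega\le\omega$ has a solution vanishing at $0$; equivalently, that $\sup\{\msp^-_i(J):|J|\le\ell,\ \text{all }i,\,n,\,(\mgr_j)\}\to0$ as $\ell\to0$. In the stationary case $\mgr_j\equiv\mgr$ this is classical: the backward convolutions $(\mgr^-)^{(*t)}*\Leb$ converge to the inverse stationary measure, Furstenberg's theorem says it is non-atomic, and a compact one-parameter family of non-atomic measures together with its limit is automatically uniformly non-atomic. The non-stationary, uniform analogue requires redoing Furstenberg's no-atoms argument with the dispersion estimate $D\ge\delta_1$ in place of irreducibility and proximality, and with the relation $\msp^-_{i-1}=\mgr^-_i*\msp^-_i$ plus tightness in place of stationarity and a finite first moment, while keeping every constant independent of $n$, $i$ and the sequence. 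Concretely one would argue by contradiction: a failure produces, after passing to subsequences, backward convolutions of $\Leb$ with a persistent atom; when the number of factors stays bounded the weak-$*$ limit is a finite convolution applied to $\Leb$, hence non-atomic — a contradiction; and when it tends to infinity a diagonal extraction gives an infinite chain $\sigma_j=\rho^*_{j+1}*\sigma_{j+1}$ with $\rho^*_j\in\bK^-$ and $\sigma_0$ atomic, along which the collision sums $\sum_q\sigma_j(\{q\})^2$ are nondecreasing and bounded by $1$, so their increments $\sum_q\Var_{f_A\sim\rho^*_{j+1}}\big(\sigma_{j+1}(\{f_A^{-1}q\})\big)$ tend to $0$; combined with $D\ge\delta_1$ this forces the one-step images to agree asymptotically, a contradiction. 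Making this last implication airtight — in particular handling the non-atomic part of the $\sigma_j$'s and the degenerate cases — is where essentially all the work lies. (If one additionally assumes a uniform fractional moment $\sup_{\mgr\in\bK}\int\|A\|^{\beta}\,d\mgr<\infty$, one may instead take $\omega(\ell)=C\ell^{\alpha}$ and verify~\eqref{eq:superfix} directly, or simply quote the uniform H\"older regularity of averaged images, e.g.\ \cite[Theorem~2.8]{GKM}.)
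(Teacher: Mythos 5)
There is a genuine gap, and you point to it yourself: everything in your plan up to the ``heart of the matter'' is soft (tightness of $\bK$, the uniform positivity of your dispersion functional $D$, the reduction to a uniform modulus), while the actual content of the lemma --- that backward non-stationary convolutions uniformly dissolve mass on short intervals, with constants independent of $n$, $i$ and the sequence $(\mgr_j)\subset\bK$ --- is exactly the step you leave as a sketch (``a contradiction argument with collision sums $\sum_q\sigma_j(\{q\})^2$ \dots where essentially all the work lies''). That unproven step is not a routine completion: it is the non-stationary no-atoms theorem itself, and the paper does not reprove it but quotes it as a black box, namely the Atoms Dissolving Theorem (Theorem~1.13 of~\cite{GK2}), applied to the \emph{inverse} measures after the one-line check that the family $\{\mgr^-:\mgr\in\bK\}$ also satisfies the measures condition. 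With that input the paper's proof is short: a fixed $k_0$ with $(\mgr_1^-*\dots*\mgr_{k_0}^-*\delta_{x'})(\{x''\})<\tfrac12\eps^2$ for all choices in $\bK$; a compactness/subsequence argument upgrading the atom bound to intervals of some length $\eps'$; averaging over $x'$ with respect to $\msp^-_{i+k_0}$ to cover all $i\le n-k_0$; and a separate compactness argument for the last $k_0$ indices, where $\msp^-_i$ is a finite backward convolution of $\Leb$ and hence non-atomic. So your proposal, as written, replaces the paper's citation by an attempted re-derivation that is not carried out; to make it a proof you must either complete that martingale/collision-sum argument in the non-stationary uniform setting (handling the non-atomic parts of the $\sigma_j$ and keeping all constants uniform over $\bK$), or simply invoke \cite[Theorem~1.13]{GK2} as the paper does.

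Two further cautions on the parts you did write. First, the one-step super-fixed-point formulation in your display~\eqref{eq:superfix} is strictly stronger than what the lemma needs and is unlikely to be attainable in this generality: a single backward step over $\mgr\in\bK$ with no moment assumption can expand intervals badly, so no modulus $\omega$ vanishing at $0$ need satisfy $T\omega\le\omega$; the paper's route deliberately works with a \emph{fixed number} $k_0$ of steps (atoms dissolve after $k_0$ convolutions) rather than with a class invariant under one step, and treats the boundary indices $i>n-k_0$ separately. Second, the uniform lower bound $D\ge\delta_1$ requires joint continuity (or at least lower semicontinuity) of $D$ on $\bK\times\mM(\RP^1)$, and the map $(\mgr,\msp_1)\mapsto\int\dist_{\mM}((f_A)_*\msp_1,\msp_2)\,d\mgr(A)$ is not obviously continuous in $\msp_1$ uniformly over unbounded supports; this can be patched with tightness, but it is an additional argument, not automatic --- and in any case $D\ge\delta_1$ by itself is the easy half, not the engine of the proof.
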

\begin{proof}
Note first that the measures $\mgr^-$ corresponding to the inverse maps also satisfy the measures condition. Indeed, if we had $f^{-1}_* \msp=\msp'$ for $\mgr$-a.e.~$f$ for some $\mgr\in\bK$, then we would also have $f_*\msp'=\msp$, and that would be a contradiction.

Now, the Atoms Dissolving Theorem~1.13 from~\cite{GK2} is applicable, and it states that there exists some $k_0$ such that for any $\mgr_1,\dots,\mgr_{k_0}\in\bK$ and any $x', x''\in\R\P^1$
\begin{equation}\label{eq:a-weights}
(\mgr_1^- * \dots * \mgr^-_{k_0} *\delta_{x'})(\{x''\}) < \frac{1}{2}\eps^2.
\end{equation}
In other words, any (reverse) length $k_0$ convolution makes weights of atoms dissolve so they do not exceed~$\frac{1}{2}\eps^2$. This implies that for some $\eps'$ one has
\begin{equation}\label{eq:J-measure}
(\mgr_1^- * \dots * \mgr^-_{k_0} * \delta_{x'} )(J)<\frac{1}{2}\eps^2 \quad \forall J, \, |J|\le \eps'.
\end{equation}
Indeed, if this statement would be violated for any $\eps'_m=\frac{1}{m}$, then  for some $J_m$, $\mgr_{1,m},\dots,\mgr_{k_0,m}$, we would find a convergent subsequence of these data,
\[
J_{m_j}\to x'', \quad \mgr_{i,m_j}\to \mgr_i \in \bK,
\]
and would get a contradiction with~\eqref{eq:a-weights}. Averaging~\eqref{eq:a-weights} over $x'$ w.r.t. $\msp^-_{i+k_0}$, we obtain the conclusion of the lemma for all $i\le n-k_0$.

Finally, all the measures $\msp^-_{n-k}$ for $k=1,\dots,k_0$ are non-atomic, as by definition $\msp^-_n=\Leb$. Again, this implies that there exists $\eps''$ such that for any $\mgr_{n-k+1}, \dots, \mgr_{n}\in \mathbf{K}$
\begin{equation}
(\mgr_{n-k+1}^- * \dots * \mgr^-_{n} *\msp^-_n )(J)<\frac{1}{2}\eps^2 \quad \forall J, \, |J|\le \eps''.
\end{equation}
Indeed, otherwise we find a convergent subsequence of intervals $J_m$ and measures $\mgr^-_{n-j}\in \bK$, and obtain a contradiction with the non-atomicity mentioned above.

Taking $\eps_1:=\min (\eps',\eps'')$ concludes the proof of Lemma \ref{l:eps-1}.
\end{proof}
Combining the statements of Lemma~\ref{l:eps-1} and Lemma~\ref{l:inv-meas}, we get the following corollary:
\begin{coro}
Assume that for some $i$ after $i$ iterations we have $\dist(x_i,y_i)<\eps_1$. Then with the probability at least $1-\frac{\eps}{2}$ we have
\[
\dist(x_n,y_n) < \eps.
\]
\end{coro}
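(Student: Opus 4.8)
The plan is to obtain the corollary by directly combining Lemma~\ref{l:eps-1} and Lemma~\ref{l:inv-meas}; the only point that needs care is the passage from the metric smallness of $\dist(x_i,y_i)$ to the measure-theoretic smallness required by Lemma~\ref{l:inv-meas}, and this has to be carried out pathwise in the realisation $f_1,\dots,f_i$.

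First I would fix the constant $\eps_1>0$ furnished by Lemma~\ref{l:eps-1}, shrinking it if necessary (which only makes that lemma's conclusion easier to satisfy) so that any two points of $\R\P^1$ at distance less than $\eps_1$ are joined by a unique shorter arc. On the event $\{\dist(x_i,y_i)<\eps_1\}$ denote by $J$ this shorter arc, so that $J$ is one of the two intervals $[x_i,y_i]$, $[y_i,x_i]$ and $|J|=\dist(x_i,y_i)<\eps_1$ (recall that on $\R\P^1$ the distance between two points equals the Lebesgue measure of the short arc joining them, as already used in the proof of Lemma~\ref{l:inv-meas}).

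Next, applying Lemma~\ref{l:eps-1} to the interval $J$ and the index $i$ gives $\msp^-_i(J)<\frac{1}{2}\eps^2$. Hence, on the event $\{\dist(x_i,y_i)<\eps_1\}$ and for every realisation of $f_1,\dots,f_i$ in it, at least one of $\msp^-_i([x_i,y_i])<\frac{1}{2}\eps^2$ or $\msp^-_i([y_i,x_i])<\frac{1}{2}\eps^2$ holds, which is exactly the hypothesis of Lemma~\ref{l:inv-meas}. Running the martingale/Markov argument of that lemma conditionally on $f_1,\dots,f_i$ then shows that, with conditional probability at least $1-\frac{\eps}{2}$, one has $\dist(x_n,y_n)<\eps$; this is the assertion of the corollary.

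The only genuine obstacle is the measurability and uniformity bookkeeping in the last step: one must know that the event $\{\dist(x_i,y_i)<\eps_1\}$ is measurable with respect to $f_1,\dots,f_i$ and that the bound $\msp^-_i(J)<\frac{1}{2}\eps^2$ holds on it for every realisation, so that Lemma~\ref{l:inv-meas} may legitimately be invoked conditionally. This is precisely what the uniformity in Lemma~\ref{l:eps-1} provides — its estimate is valid simultaneously for all $n$, all $i$, all choices $\mgr_j\in\bK$, and all intervals of length $<\eps_1$ — so no work beyond assembling the two lemmas is required.
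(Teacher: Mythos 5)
Your proof is correct and follows essentially the same route as the paper, which simply states the corollary as the combination of Lemma~\ref{l:eps-1} and Lemma~\ref{l:inv-meas}: the shorter arc $J$ between $x_i$ and $y_i$ has $|J|<\eps_1$, so $\msp^-_i(J)<\tfrac{1}{2}\eps^2$ by Lemma~\ref{l:eps-1}, which is exactly the hypothesis of Lemma~\ref{l:inv-meas} (applied conditionally on $f_1,\dots,f_i$). Your extra care about measurability and the uniformity of Lemma~\ref{l:eps-1} in $i$, $n$ and the $\mgr_j$ is a reasonable elaboration of details the paper leaves implicit.
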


We are now ready to conclude the proof of Proposition \ref{p.nF}  by considering two different cases. Assume first that there exists $\eps_2>0$ and $k_2$ such that for any $x',y'\in \R\P^1$ and any $\mgr_1,\dots,\mgr_{k_2}\in\bK$ with the probability at least $\eps_2$ the following event holds:
\[
\exists i\le k_2: \quad \dist (x'_i, y'_i) \le \eps_1,
\]
where
\[
x'_0:=x', \, y'_0:=y', \quad x'_{i}=f_i(x'_{i-1}), \, y'_{i}=f_i(y'_{i-1}),
\]
and $f_i$ are random maps  chosen independently w.r.t. $\mgr_i$.

In this case, for any $k_3$ the probability that the iterations of two initial points $x$ and $y$ do not approach each other closer than $\eps_1$ during $k_2 \cdot k_3$ iterations does not exceed $(1-\eps_2)^{k_3}$: we have $k_3$ attempts with at least $\eps_2$ chance of success at each of these. Taking $k_3$ sufficiently large so that
$(1-\eps_2)^{k_3}<\frac{\eps}{2}$ and defining $n:=k_2 \cdot k_3$, we get that with the probability at least $1-\frac{\eps}{2}$ there exists $i\le n$ such that
\[
\dist(x_i,y_i)<\eps_1,
\]
and hence $\dist(x_n,y_n)<\eps$ with the probability at least $(1-\frac{\eps}{2})^2>1-\eps$.

Finally, if such $\eps_2$ and $k_2$ do not exist, we take a sequence of candidates $\eps_{2,(m)}=\frac{1}{m}$, $k_{2,(m)}=m$ and consider the points $x'_{(m)}$, $y'_{(m)}$ and measures $\mgr_{1,(m)}, \dots,\mgr_{m,(m)}\in \bK$, for which the desired statement fails. Extracting a convergent subsequence, we find $x',y'$ and $\mgr_1,\mgr_2,\dots$, for which almost surely the images $x'_i,y'_i$ always stay at the distance at least~$\eps'$.

Now, Atoms Dissolving Theorem~1.13 from~\cite{GK2} together with the compactness argument that we have already applied imply that for some $k_4$, $\eps_3>0$ for any interval $J$ of length $|J|\le \eps_3$ we have
\[
\Prob(x_{k_4}\in J), \Prob(y_{k_4}\in J) \le \frac{1}{3}.
\]
In particular, for any $i>k_4$ the product of matrices $T_{[k_4,i]}:=A_{i}\dots A_{k_4+1}$ cannot have norm higher than $\frac{4}{\eps_3 \eps_2}$, as otherwise with the probability at least $\frac{1}{3}$ both points $x_{k_4},y_{k_4}$ will be at the distance at least $\frac{\eps_3}{2}$ from its most contracted direction, and thus their images
\[
x_i= f_{T_{[k_4,i]}}(x_{k_4}), \quad y_i= f_{T_{[k_4,i]}}(y_{k_4})
\]
will be at the distance at most $\frac{1}{\|T_{[k_4,i]}\| \cdot \frac{\eps_3}{2}} < \frac{\eps_2}{2}$ from the image of the most expanded direction, and hence closer than $\eps_2$ to each other.

Thus, the products $A_{i}\dots A_{k_4+1}$ almost surely satisfy a uniform upper bound by~$\frac{4}{\eps_3 \eps_2}$. Hence, the norms of the matrices $A_i$ are also uniformly bounded.

Consider now the compact set $\bK'$ that is the closure of $\{\mgr_{k_4+1},\mgr_{k_4+2},\dots\}$. This is a closed subset of $\bK$, and hence a compact set of measures, on which the norms of matrices are uniformly bounded. Hence, we are again in the assumptions of the Nonstationary Furstenberg Theorem (\cite[Theorem~1.4]{GK2}). And thus the uniform bound on the norms of products $T_{[k_4,i]}$ provides us a contradiction, making this second case impossible.

This concludes the proof of Proposition~\ref{p.nF}.
\end{proof}
\begin{remark}
In case of a higher dimension, without additional assumptions (an analogue of absence of a finite invariant set of planes) the measure condition does not suffice to ensure the nonstationary ergodicity, as \cite[Example\,A.1] {GK2} shows. Nevertheless, with some extra assumptions (that would be sufficient to guarantee a non-stationary version of ``simplicity of Lyapunov spectrum'', or at least ``simplicity of the first Lyapunov exponent'') the conclusion of the proposition could be generalised to the case of $SL(d,\R)$, $d>2$.
\end{remark}

\begin{remark}
Contraction of random orbits for general stationary dynamics on the circle was established and studied in detail in \cite{A}, \cite{KN}, \cite{M}.  Even though in some parts of the proof of Proposition~\ref{p.nF} we have used the fact that we are composing projective maps, we expect that under some suitable assumptions the statement and the proof can be adapted to a more general case of non-stationary dynamics of circle homeomorphisms.
\end{remark}

\section{Proof of Theorem \ref{t.main}}

We will establish the second part of the theorem, the Large Deviations estimates, first; it implies the first part by an easy application of the Borel--Cantelli Lemma.

For a closed interval $J\subset \R$, denote by $\mM(J)$ the space  of Borel probability measures on~$J$ equipped with the Wasserstein metric.

Fix any $\varphi\in C(X, \mathbb{R})$ and set 
$M= \max(1, \max_{x\in X}|\varphi(x)|)$. The continuous map $\varphi:X\to \mathbb{R}$ induces a map on the space of Borel probability measures $\varphi_*:\mathcal{M}(X)\to \mathcal{M}([-M, M])$.
\begin{lemma}\label{l.contphi}
The map $\varphi_*:\mathcal{M}(X)\to \mathcal{M}([-M, M])$ is continuous.
\end{lemma}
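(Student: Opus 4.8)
The plan is to reduce continuity of $\varphi_*$ in the Wasserstein metrics to the (elementary) fact that pushforward by a continuous map is continuous for the weak-$*$ topology, and then invoke that on a compact metric space the Wasserstein distance metrizes weak-$*$ convergence. Indeed, for any $g\in C([-M,M])$ the composition $g\circ\varphi$ lies in $C(X)$ and
\[
\int_{[-M,M]} g\, d(\varphi_*\msp) = \int_X (g\circ\varphi)\, d\msp,
\]
so if $\msp_k\to\msp$ weakly-$*$ in $\mM(X)$ then $\varphi_*\msp_k\to\varphi_*\msp$ weakly-$*$ in $\mM([-M,M])$; since $X$ and $[-M,M]$ are compact, both Wasserstein metrics induce the weak-$*$ topology, and continuity follows.

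The route I would actually write out gives an explicit modulus of continuity. Fix $\eps>0$. Since $X$ is compact, $\varphi$ is uniformly continuous, so pick $\delta\in(0,\eps)$ with $d_X(x,y)<\delta \implies |\varphi(x)-\varphi(y)|<\eps$. Given $\msp_1,\msp_2\in\mM(X)$, choose a coupling $\gamma$ of $(\msp_1,\msp_2)$ with $\iint_{X\times X} d_X\, d\gamma \le \dist_{\mM}(\msp_1,\msp_2)+\eps^2$. Then $(\varphi\times\varphi)_*\gamma$ is a coupling of $\varphi_*\msp_1$ and $\varphi_*\msp_2$, so
\[
\dist_{\mM}(\varphi_*\msp_1,\varphi_*\msp_2) \le \iint_{X\times X} |\varphi(x)-\varphi(y)|\, d\gamma(x,y).
\]
Splitting the integral according to whether $d_X(x,y)<\delta$ or not: on the first region the integrand is $<\eps$; on the second region $|\varphi(x)-\varphi(y)|\le 2M$, while by the Markov inequality $\gamma(\{d_X\ge\delta\}) \le \delta^{-1}\bigl(\dist_{\mM}(\msp_1,\msp_2)+\eps^2\bigr)$. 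Hence
\[
\dist_{\mM}(\varphi_*\msp_1,\varphi_*\msp_2) \le \eps + \frac{2M}{\delta}\bigl(\dist_{\mM}(\msp_1,\msp_2)+\eps^2\bigr),
\]
which is $<3\eps$ once $\eps$ is small and $\dist_{\mM}(\msp_1,\msp_2)$ is small enough; this is exactly continuity (even uniform continuity) of $\varphi_*$.

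There is no genuine obstacle here; the only point requiring a little care is that $\varphi$ need not be Lipschitz, so one cannot bound the transport cost of the images by a constant multiple of the transport cost of $\msp_1,\msp_2$ directly. The uniform-continuity-plus-Markov splitting above is precisely what repairs this. Everything else — that the pushforward of a coupling is a coupling, and that any coupling provides an upper bound for the Wasserstein distance — is immediate from the definition \eqref{eq:W-def}.
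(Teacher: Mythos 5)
Your main argument is essentially the paper's own proof: uniform continuity of $\varphi$ on the compact $X$, a near-optimal coupling $\gamma$ of $(\msp_1,\msp_2)$, the Markov inequality to control $\gamma(\{d_X\ge\delta\})$, and the pushforward $(\varphi,\varphi)_*\gamma$ as a coupling of the image measures, with the same splitting of the integral. One small quantitative slip in your write-up: you take a coupling of cost at most $\dist_{\mM}(\msp_1,\msp_2)+\eps^2$ and then assert that the bound $\eps+\frac{2M}{\delta}\bigl(\dist_{\mM}(\msp_1,\msp_2)+\eps^2\bigr)$ is $<3\eps$ once $\eps$ and the distance are small; but $\delta=\delta(\eps)$ comes from uniform continuity and may be far smaller than $\eps^2$, so the term $2M\eps^2/\delta$ need not be small. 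The repair is immediate and is exactly what the paper does: tie the slack to $\delta$, i.e.\ assume $\dist_{\mM}(\msp_1,\msp_2)<\eps\delta/(4M)$ and pick (by definition of the infimum) a coupling of cost $<\eps\delta/(4M)$, so that Markov gives $\gamma(\{d_X\ge\delta\})<\eps/(4M)$ and the total is at most $2\eps$. Your opening soft argument (pushforward by a continuous map is weak-$*$ continuous, and the Wasserstein distance metrizes the weak-$*$ topology on a compact space) is also a complete proof of the stated continuity, at the cost of quoting the metrization fact instead of exhibiting an explicit modulus.
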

\begin{proof}
Indeed, as $X$ is compact and thus $\varphi$ is uniformly continuous on $X$, there exists $\delta>0$ such that $d_X(x,y)<\delta$ implies $|\varphi(x)-\varphi(y)|<\frac{\eps'}{2}$. On the other hand, take $\eps'':=\frac{\eps' \delta}{4M}$. If for some coupling $\gamma$ between two measures $\msp, \msp'$ the integral~\eqref{eq:W-def} takes value less than $\eps''$, then by Markov inequality
\[
\gamma(\{(x,y) \mid d_X(x,y) > \delta \}) < \frac{\eps''}{\delta} = \frac{\eps'}{4M},
\]
and hence, using the pushforward $\gamma':=(\varphi,\varphi)_* \gamma$ as a coupling between $\varphi_* \msp$ and $\varphi_*\msp'$, we have
\begin{multline*}
\iint |u-v| \, d\gamma'(u,v) = \iint_{X\times X} |\varphi(x)-\varphi(y)| \, d\gamma(x,y) \le
\\
\le \frac{\eps'}{2} \cdot \gamma(\{(x,y) \mid d_X(x,y) \le \delta \}) + 2M \cdot \gamma(\{(x,y) \mid d_X(x,y) > \delta \} )
\\
< \frac{\eps'}{2} \cdot 1 + 2M \cdot \frac{\eps'}{4M} = \frac{\eps'}{2} + \frac{\eps'}{2} = \eps'.
\end{multline*}
Thus, the inequality $\dist_{\mM}(\msp,\msp')<\eps''$ implies $\dist_{\mM([-M,M])} (\varphi_*\msp,\varphi_*\msp')<\eps'$,
which proves Lemma \ref{l.contphi}.
\end{proof}

Since $\mathcal{M}(X)$ is compact, Lemma \ref{l.contphi} implies that $\varphi_*$ is uniformly continuous. Combined with the Standing Assumption this implies that for any $\eps'>0$ there exists $m$ such that for any $n$ and for any initial measure $\msp\in\mM$,
\begin{equation}\label{eq:d-phi}
\text{dist}_{\mM([-M,M])}(\varphi_*(\mu_{n+m}*\ldots *\mu_{n+1} *\msp), \varphi_* \msp_{n+m})<\eps',
\end{equation}
where the sequence of measures $\{\msp_i\}_{i\ge 0}$ is given by \eqref{eq:nu-def}.

Now, fix $\eps>0$, and let us obtain the Large Deviations estimates for this~$\eps$. Fix $m$ sufficiently large to make sure that \eqref{eq:d-phi} holds with $\eps' = \frac{\eps^2}{30M}$. 

Since $\varphi$ is bounded, it suffices to establish~(\ref{eq:LD}) for all $n\ge 1$ that are divisible by $m$. For any $n=mq$, $q\in \mathbb{N}$, decompose the iterations $k=1,\dots,mq$ into $m$ arithmetic sequences $k=r+jm$, $j=0,\dots, q-1$ with the difference $m$, indexed by the residue $r=1,\dots,m$.

It suffices to check that Large Deviations estimate holds for each of these subsequences. Indeed,
\begin{multline*}
 \frac{1}{n}\left|\sum_{k=1}^n \varphi(f_k\circ \ldots \circ f_1(x))-\sum_{k=1}^n \int_X \varphi d\nu_k \right|\le   \\
  \le \frac{1}{m}\sum_{r=1}^{m}\left|\frac{1}{q}\left(\sum_{j=0}^{q-1} \varphi(f_{jm+r}\circ \ldots \circ f_1(x))-\sum_{j=0}^{q-1} \int_X \varphi d\nu_{jm+r} \right)\right|.
\end{multline*}
Therefore, it suffices to prove that for each given $r=1, 2, \ldots, m$ we have for all $q\ge 1$ that
\begin{equation}\label{eq:m}
\mathbb{P}\left(\frac{1}{q}\left|\sum_{j=0}^{q-1} \varphi(f_{jm+r}\circ \ldots \circ f_1(x))-\sum_{j=0}^{q-1} \int_X \varphi d\nu_{jm+r} \right|>\varepsilon\right)<\frac{C}{m}e^{-\delta n},
\end{equation}
since that will imply (\ref{eq:LD}).

Fix $r\in\{1, 2, \ldots, m\}$, and denote
\[
y_j := x_{r+mj}= f_{r+mj}\circ\dots\circ f_1(x);
\]
then, the first sum in~\eqref{eq:m} is given by
$\sum_{j=0}^{q-1} \varphi(y_j).$

To establish the estimate~\eqref{eq:m}, we are going to compare this sum to a sum of \emph{independent} random variables $\xi_j$ that are distributed w.r.t. $\varphi_*\msp_{jm+r}$ respectively.
To do so, for the technical reasons of the construction, we will add two more random variables per iteration that are distributed uniformly on $[0,1]$ and that are independent from the iterations and altogether. Namely, we consider 
\[
\Omega:=\{(f_n,z_n,z'_n)_{n\in\N}\}=C(X,X)^{\N} \times [0,1]^\N \times [0,1]^\N,
\]
equipping it with the probability measure
\[
\tilde{\Prob}:=\Prob \times \Leb^{\N} \times \Leb^{\N},   
\]
where $\Prob$ is defined by \eqref{eq:Prob}.
\begin{lemma}\label{l:xi}
There exist random variables $\xi_j, \xi'_j$ on the space $(\Omega,\tilde{\Prob})$ such that:
\begin{itemize}
\item $\xi_j$ are independent for all $j$, and the law of $\xi_j$ is $\varphi_* \msp_{r+jm}$;
\item $\xi'_j$ are i.i.d Bernoulli variables, taking value~$1$ with the probability~$\frac{\eps}{8M}$.
\item For every $j$, if $\xi'_j$ takes value~$0$, then $|\varphi(y_j)-\xi_j|\le \frac{\eps}{3}$.
\end{itemize}
\end{lemma}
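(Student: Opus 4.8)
The plan is to build the two families $(\xi_j)$ and $(\xi'_j)$ by induction on the macro-steps $j$, working on $(\Omega,\tilde{\Prob})$ and consuming at step $j$ the block of maps $f_{(j-1)m+r+1},\dots,f_{jm+r}$ (the ones carrying $y_{j-1}$ to $y_j$) together with two so-far-unused auxiliary uniforms $z_j,z'_j$. Let $\mathcal G_j$ be the $\sigma$-algebra generated by $f_1,\dots,f_{jm+r}$ and by all the $z$'s and $z'$'s used through step $j$; the construction keeps $y_0,\dots,y_j$, $\xi_0,\dots,\xi_j$, $\xi'_0,\dots,\xi'_j$ all $\mathcal G_j$-measurable. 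The engine is the following: conditionally on $\mathcal G_{j-1}$ the point $y_{j-1}$ is frozen, hence the conditional law of $\varphi(y_j)$ equals $\mu^{(j)}:=\varphi_*\big(\mgr_{jm+r}*\dots*\mgr_{(j-1)m+r+1}*\delta_{y_{j-1}}\big)$, and by~\eqref{eq:d-phi} (applied with initial moment $(j-1)m+r$, initial measure $\delta_{y_{j-1}}$, and $\eps'=\frac{\eps^2}{30M}$) one has $\dist_{\mM([-M,M])}(\mu^{(j)},\nu^{(j)})<\frac{\eps^2}{30M}$, where $\nu^{(j)}:=\varphi_*\msp_{jm+r}$ is exactly the target law for $\xi_j$. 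The only index for which the relevant block is shorter than $m$ is $j=0$ (there $y_0=x_r$ is reached from $x$ in $r\le m$ steps); that single boundary index is harmless and can be treated separately, e.g.\ by taking the product coupling of $\mu^{(0)}$ and $\nu^{(0)}$ with an independent $\xi'_0$, so I concentrate on $j\ge 1$.

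The coupling itself: since $\mu^{(j)}$ and $\nu^{(j)}$ both live on $[-M,M]\subset\R$, I take $\gamma^{(j)}$ to be their monotone (optimal) coupling, given by explicit formulas in terms of the generalized inverse distribution functions of $\mu^{(j)}$ and $\nu^{(j)}$, hence depending measurably on $y_{j-1}$, with marginals $\mu^{(j)},\nu^{(j)}$ and cost $\iint|u-v|\,d\gamma^{(j)}(u,v)=\dist_{\mM([-M,M])}(\mu^{(j)},\nu^{(j)})<\frac{\eps^2}{30M}$. Disintegrating $\gamma^{(j)}(du,dv)=\mu^{(j)}(du)\,\gamma^{(j)}_u(dv)$ yields a jointly measurable kernel $T_j:[-M,M]\times[0,1]\to[-M,M]$, measurable in $y_{j-1}$ as well, with $T_j(u,\cdot)_*\Leb=\gamma^{(j)}_u$; I then set $\xi_j:=T_j(\varphi(y_j),z_j)$. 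Because $z_j$ is independent of $\mathcal G_{j-1}$ and of $\varphi(y_j)$, the conditional law of the pair $(\varphi(y_j),\xi_j)$ given $\mathcal G_{j-1}$ is $\gamma^{(j)}$; in particular the conditional law of $\xi_j$ given $\mathcal G_{j-1}$ is the \emph{non-random} measure $\nu^{(j)}$, so $\xi_j$ is independent of $\mathcal G_{j-1}$, hence of $\xi_0,\dots,\xi_{j-1}$. Induction then gives that $(\xi_j)$ is an independent family with the required marginals, and $\E\xi_j=\int u\,d\nu^{(j)}=\int\varphi\,d\nu_{jm+r}$ as a bonus.

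The Bernoulli variables are obtained as a ``top-up'' of the overshoot event. By the Markov inequality the $\mathcal G_{j-1}$-measurable quantity $a_j:=\gamma^{(j)}\big(\{(u,v):|u-v|>\tfrac{\eps}{3}\}\big)$ satisfies $a_j<\dfrac{\eps^2/(30M)}{\eps/3}=\dfrac{\eps}{10M}<\dfrac{\eps}{8M}$, so $p_j:=\dfrac{\eps/(8M)-a_j}{1-a_j}\in(0,1)$. I set $\xi'_j:=1$ whenever $|\varphi(y_j)-\xi_j|>\frac{\eps}{3}$; otherwise $\xi'_j:=1$ if $z'_j<p_j$ and $\xi'_j:=0$ if $z'_j\ge p_j$. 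Then
\[
\Prob\big(\xi'_j=1\mid\mathcal G_{j-1}\big)=a_j+(1-a_j)\,p_j=\frac{\eps}{8M},
\]
a deterministic value, so $\xi'_j$ is independent of $\mathcal G_{j-1}$, hence of $\xi'_0,\dots,\xi'_{j-1}$; by induction $(\xi'_j)$ are i.i.d.\ Bernoulli with parameter $\frac{\eps}{8M}$. Finally, $\xi'_j=0$ excludes the overshoot, i.e.\ forces $|\varphi(y_j)-\xi_j|\le\frac{\eps}{3}$, which is the third bullet.

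The one genuinely delicate point is the measurability bookkeeping: one must verify that $\gamma^{(j)}$ and the kernel $T_j$ can be selected to depend measurably on the frozen point $y_{j-1}$ (so that $\xi_j,\xi'_j$ are honest random variables on $\Omega$), and one must keep the filtration straight so that ``the conditional law given $\mathcal G_{j-1}$ is a fixed non-random measure'' really upgrades to ``independent of the past''. On the real line both issues are handled cleanly by the explicit CDF/quantile formulas for the monotone coupling; everything else — the Markov estimate and the identity $a_j+(1-a_j)p_j=\frac{\eps}{8M}$ — is routine.
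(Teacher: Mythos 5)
Your construction is essentially the paper's own proof: the same filtration, the same identification of the conditional law of $\varphi(y_j)$ given the past as $\varphi_*(\mgr_{r+jm}*\dots*\mgr_{r+(j-1)m+1}*\delta_{y_{j-1}})$, the same monotone (CDF/quantile) optimal coupling on the line implemented through an independent auxiliary uniform, the same Markov bound $\frac{3\eps'}{\eps}=\frac{\eps}{10M}<\frac{\eps}{8M}$, and the same ``top-up'' randomization producing the Bernoulli variables (where your use of the fresh uniform $z'_j$ is in fact the intended role of the second uniform sequence in the paper). The only divergence is your explicit product-coupling treatment of the boundary index $j=0$, which delivers the third bullet only for $j\ge 1$ --- strictly weaker than the lemma's literal statement but harmless for the proof of Theorem~\ref{t.main}, and this is precisely the boundary case the paper itself passes over in silence.
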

We postpone the (slightly technical) proof of this lemma until the end of this section. Note that it allows to complete the proof of Theorem~\ref{t.main}. Indeed, note first that the last conclusion of Lemma~\ref{l:xi} implies the inequality
\begin{equation}\label{eq:phi-xi}
|\varphi(y_j)-\xi_j|\le \frac{\eps}{3} + 2M \xi'_j;
\end{equation}
notice that here we are using the fact that $|\varphi|\le M$, and hence $|\varphi(y_j)-\xi_j|\le 2M$.

The sum in~\eqref{eq:m} can be estimated as
\begin{multline}\label{eq:triangle}
\frac{1}{q}\left|\sum_{j=0}^{q-1} \varphi(f_{jm+r}\circ \ldots \circ f_1(x))-\sum_{j=0}^{q-1} \int_X \varphi d\msp_{jm+r} \right| =
\\
=\frac{1}{q}\left|\sum_{j=0}^{q-1} \varphi(y_j)-\sum_{j=0}^{q-1} \E \xi_j \right| \le
\frac{1}{q}\sum_{j=0}^{q-1} \left| \varphi(y_j)- \xi_j \right|  + \frac{1}{q}\left| \sum_{j=0}^{q-1}  (\xi_j- \E\xi_j) \right|
\le
\\ \le
\frac{\eps}{3} + 2M \cdot \frac{1}{q} \sum_{j=0}^{q-1} \xi'_j + \frac{1}{q}\left| \sum_{j=0}^{q-1}  (\xi_j- \E\xi_j) \right|
\end{multline}
Now, note, that as $\E \xi'_j = \frac{\eps}{8M}<\frac{\eps}{6M}$, one has
\[
\tilde{\Prob}\left(\frac{1}{q} \sum_{j=0}^{q-1} \xi'_j > \frac{\eps}{6M}\right) < C_1 e^{- \delta_1 q}
\]
for some $\delta_1>0$ due to the Large Deviations estimate for the independent variables~$\xi'_j$. In the same way, as $\xi_j$ are independent and uniformly bounded by~$M$, from the Large Deviations estimate one has
\[
\tilde{\Prob}\left(\left| \frac{1}{q} \sum_{j=0}^{q-1} (\xi_j - \E \xi_j) \right| > \frac{\eps}{3}\right) <  C_2 e^{- \delta_2 q}
\]

Now, if both $\frac{1}{q} \sum_{j=0}^{q-1} \xi'_j\le \frac{\eps}{6M}$ and $\frac{1}{q}\left| \sum_{j=0}^{q-1}  (\xi_j- \E\xi_j) \right| < \frac{\eps}{3}$ hold,~\eqref{eq:triangle} implies
\[
\frac{1}{q}\left|\sum_{j=0}^{q-1} \varphi(y_j)-\sum_{j=0}^{q-1} \E \xi_j \right| \le \frac{\eps}{3} + 2M \cdot \frac{\eps}{6M} + \frac{\eps}{3} =\eps.
\]
Hence, the probability in~\eqref{eq:m} is bounded from above by the
\begin{multline*}
\Prob\left(\frac{1}{q}\left|\sum_{j=0}^{q-1} \varphi(y_j)-\sum_{j=0}^{q-1} \E \xi_j \right|>\eps\right)\le \\ \le \tilde{\Prob}\left(\frac{1}{q} \sum_{j=0}^{q-1} \xi'_j > \frac{\eps}{6M}\right)  + \tilde{\Prob}\left(\left| \frac{1}{q} \sum_{j=0}^{q-1} (\xi_j - \E \xi_j) \right| > \frac{\eps}{3}\right)
\\ < C_1 e^{- \delta_1 q}+   C_2 e^{- \delta_2 q},
\end{multline*}
and we obtain the desired exponentially small bound with $\delta=\min(\delta_1,\delta_2)$.

Since we have obtained the desired bound~\eqref{eq:m} that holds for each $r=1,\dots, m$, this completes the proof of Theorem~\ref{t.main} modulo Lemma~\ref{l:xi}.

\begin{proof}[Proof of Lemma~\ref{l:xi}]
On the space $(\Omega,\tilde{\Prob})$, define a sequence of $\sigma$-algebrae
\[
\mF_j:=\sigma(\{(f_k,z_k,z'_k) \mid k\le r+jm\}).
\]
Note that the random values $\varphi(y_i)=\varphi(x_{r+mi})$, $i=0,\dots, j-1$ are measurable w.r.t. $\mF_{j-1}$; meanwhile, the conditional distribution of $\varphi(y_{j})$ with respect to this $\sigma$-algebra is given by
\[
D_{j,y_{j-1}}:=\varphi_*(\mu_{r+mj}*\ldots *\mu_{r+1+m(j-1)} * \delta_{y_{j-1}}).
\]
Due to the choice of $m$, this conditional distribution is $\eps'$-close to~$\varphi_*\msp_{r+jm}$, that is the law of the desired random variable $\xi_j$. This suggests to construct $\xi_j$ and $\xi'_j$ in the following way. Assume that
\begin{enumerate}
\item\label{i:meas} for every $j$ the random variables $\xi_j$, $\xi'_j$ are measurable w.r.t. $\mF_j$;
\item\label{i:xi} the conditional distribution of $\xi_j$ w.r.t. $\{(f_k,z_k,z'_k) \mid k\le r+(j-1)m\}$ is~$\varphi_*\msp_{r+jm}$;
\item\label{i:xi-p} the conditional distribution of $\xi'_j$ w.r.t. $\{(f_k,z_k,z'_k) \mid k\le r+(j-1)m\}$ is Bernoulli with the success probability $\frac{\eps}{8M}$ ;
\item\label{i:diff} If $|\varphi(y_j)-\xi_j|> \frac{\eps}{3}$, then $\xi'_j$ takes value~$1$.
\end{enumerate}
Then these random variables satisfy the conditions of Lemma~\ref{l:xi}. Indeed, property~\eqref{i:xi} implies that $\xi_j$ is independent from $\sigma$-algebra $\mF_{j-1}$. As $\xi_0,\dots,\xi_{j-1}$ are measurable w.r.t. $\mF_{j-1}$ due to the property~\eqref{i:meas}, the random variable $\xi_j$ is independent from $\xi_0,\dots,\xi_{j-1}$.

In the same way, $\xi'_j$ is independent from $\xi'_0,\dots,\xi'_{j-1}$ due to~\eqref{i:meas} and~\eqref{i:xi-p}. The last conclusion of the lemma will follow from~\eqref{i:diff}.

Let us now construct random variables $\xi_j$, $\xi'_j$, satisfying~\eqref{i:meas}--\eqref{i:diff}.

Recall that between two probability measures on the real line, the optimal transport can be constructed explicitly. Namely, assume that we are given two probability measures $\rho, \rho'$ on the real line. Take two independent random variables $\eta,\zeta$, where $\eta$ is distributed w.r.t. $\rho$, and $\zeta$ is uniformly distributed on $[0,1]$. Let
\[
\Phi_\rho(y,z):=\rho((-\infty,y)) + z\cdot \rho(\{y\});
\]
then $\Phi(\eta,\zeta)$ is a uniformly distributed random variable (this is the standard principle of substituting a random variable to its own distribution function, where the independent random variable is used to ``dissolve the atoms''). On the other hand, an application of the function
\[
\Psi_{\rho'}(s) = \sup\{y: \rho'((-\infty,y))\le s\},
\]
that is an inverse to the distribution function of $\rho'$, sends the Lebesgue measure to~$\rho'$. Hence, their composition, $F_{(\rho,\rho')}:=\Psi_{\rho'}\circ \Phi_\rho$, provides a coupling between~$\rho$ and~$\rho'$. Namely, the random variable $\xi:=F_{(\rho,\rho')}(\eta,\zeta)$ is distributed w.r.t.~$\rho'$, and the desired coupling is given by the joint distribution~$(\eta,\xi)$. This coupling minimizes the Wasserstein distance \cite{V}.

Now, for each $j$ consider random variables
\[
\xi_j:= F_{(D_{j,y_{j-1}}, \varphi_*\msp_{jm+r})} (\varphi(y_j), z_{r+jm}).
\]
Properties~\eqref{i:meas} and~\eqref{i:xi} then are satisfied by construction.

Due to the optimality of this transport and the assumption on~$m$, conditionally to any $y_{j-1}$ we have
\[
\E (| \xi_j - \varphi(y_j) | \, \mid y_{j-1}) =\dist_{\mM([-M,M])}(D_{j,y_{j-1}}, \varphi_*\msp_{jm+r}) \le \eps'.
\]
Now, consider the event
\[
A_j:= \left\{|\xi_j - \varphi(y_j)| > \frac{\eps}{3}\right\}.
\]
Due to Markov inequality, conditionally to any $y_{j-1}$, its probability does not exceed
\[
\tilde{\Prob}(A_j \mid y_{j-1}) \le \frac{3}{\eps} \cdot  \E (| \xi_j - \varphi(y_j) | \, \mid y_{j-1}) \le \frac{3\eps'}{\eps} =\frac{3}{\eps}\cdot\frac{\eps^2}{30M}< \frac{\eps}{8M}.
\]

Now, define $\xi'_j$ by
\begin{equation}\label{eq:xi-p-1}
\xi'_j = \Ind_{A_j} + \Ind_{ |\xi_j - \varphi(y_j)| \le \frac{\eps}{3} } \cdot \Ind_{z_{r+jm} \le \psi(j,y_{j-1})},
\end{equation}
where the value
\[
\psi(j,y_{j-1}):=\frac{\frac{\eps}{8M}-\tilde{\Prob}(A_j\mid y_{j-1})}{1-\tilde{\Prob}(A_j \mid y_{j-1})} \in (0,1)
\]
is chosen in such a way that
\[
\tilde{\Prob}(\xi'_j  \mid y_{j-1}) = \frac{\eps}{8M}
\]
for any value of $y_{j-1}$. The choice~\eqref{eq:xi-p-1} ensures both property~\eqref{i:xi-p} (measurability follows from the explicit formulae) and~\eqref{i:diff} (that follows directly from the construction).

This completes the construction of random variables $\xi_j, \xi'_j$ and the verification of their properties, thus the proof of Lemma \ref{l:xi}.
\end{proof}

\section{Concluding remarks}
\subsection{Different phase spaces at different times}

{
A general remark is that in the non-stationary setting the phase spaces at different iterations are no longer canonically identified with each other. Namely, consider a random orbit
\[
x_{n}=f_{n}(x_{n-1}).
\]
An application of a time-dependent change of variables $y_n=h_n(x_n)$, where each $h_n$ is a homeomorphism of~$X$, sends an orbit of the initial system to
\[
y_n=g_n(y_{n-1}), \quad g_n=h_n \circ f_n \circ h_{n-1}^{-1}.
\]
This corresponds to the random iterations of another system from the same class, with measures $\mgr'_n=(T_n)_* \mgr_n$ that are push-forward images of $\mgr_n$ under the ``change of variable'' maps
\[
T_n: C(X,X)\to C(X,X), \quad T_n:f\mapsto h_n \circ f_n \circ h_{n-1}^{-1}.
\]
However, the equality between some $x_n$ and $x_m$ does not imply the equality between their images $y_n=h_n(x_n)$ and $y_m=h_m(x_m)$. This is why one can (and should) think of the spaces $X$ at different moments of time as of {\it different} fibers. 
}

{Moreover, in the absence of stationarity we do not need to assume that the phase space stays the same all the time. Rather, we can consider a sequence of compact metric spaces $X_0, X_1, X_2, \ldots$, and a sequence of probability measures $\mgr_n, n\ge 1,$ on $C(X_{n-1}, X_{n})$:
\[
X_0 \xrightarrow[f_1]{\mgr_1} X_1 \xrightarrow[f_2]{\mgr_2} X_2 \rightarrow \dots \rightarrow X_{n-1} \xrightarrow[f_n]{\mgr_n} X_{n} \rightarrow \dots.
\]
Then, given a sequence of ``observable'' functions $\varphi_n\in C(X_n, \mathbb{R})$, we can ask the same  questions as before. }

{It is reasonable to require a uniform bound on the diameters of $X_n$, and a uniform modulus of continuity for the functions $\varphi_n$; Theorem~\ref{t.main} then can be generalized to such a setting:}
\begin{theorem}\label{t:gen}
Assume that the diameters of $X_n$ are uniformly bounded, and that the Standing Assumption (with the replacement of $X$ by corresponding $X_n$'s), holds. Then for any sequence $\varphi_n\in C(X_n, \mathbb{R})$ of functions admitting a uniform modulus of continuity and any $x\in X_0$ we have that
$$
\frac{1}{n}\left|\sum_{k=1}^n \varphi_k(f_k\circ \ldots \circ f_1(x))-\sum_{k=1}^n \int_{X_k} \varphi_k d\msp_k \right|\to 0 \ \ \text{\rm as}\ \ \ n\to \infty,
$$
where {$\msp_{n}=\mgr_n*\msp_{n-1}$} for all $n\ge 1$, {and $\msp_0$ is any initial Borel probability measure on~$X_0$}.

Moreover, an analogue of the Large Deviations Theorem holds: for any $\eps>0$ there exists $C,\delta>0$ such that for any $x\in X_0$
\[
\forall n\in \mathbb{N},\ \quad \Prob\left(
\frac{1}{n}\left|\sum_{k=1}^n \varphi_k(f_{k}\circ \ldots \circ f_1(x))-\sum_{k=1}^n \int_{X_k} \varphi_k d\msp_k \right|> \eps
\right) < C \exp(-\delta n).
\]
\end{theorem}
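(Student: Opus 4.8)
The plan is to deduce Theorem~\ref{t:gen} from the proof of Theorem~\ref{t.main} essentially verbatim, after one preliminary normalization of the observables. First I would replace each $\varphi_k$ by
\[
\tilde\varphi_k := \varphi_k - \int_{X_k}\varphi_k\,d\msp_k .
\]
This changes nothing in the statement: the subtracted constants satisfy $\int_{X_k}\tilde\varphi_k\,d\msp_k=0$, so
\(
\frac1n\bigl|\sum_{k=1}^n \varphi_k(f_k\circ\cdots\circ f_1(x)) - \sum_{k=1}^n\int_{X_k}\varphi_k\,d\msp_k\bigr|
= \frac1n\bigl|\sum_{k=1}^n \tilde\varphi_k(f_k\circ\cdots\circ f_1(x))\bigr|.
\)
The gain is uniform boundedness: if $D:=\sup_n\diam X_n<\infty$ and $\omega$ is a common modulus of continuity for all $\varphi_n$, then $\osc(\varphi_k)\le\omega(D)$, hence $\int_{X_k}\varphi_k\,d\msp_k$ lies in the range of $\varphi_k$ and $|\tilde\varphi_k|\le\omega(D)$ on $X_k$. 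Setting $M:=\max(1,\omega(D))$, each normalized observable is a continuous map $\tilde\varphi_k\colon X_k\to[-M,M]$ admitting the same modulus of continuity $\omega$, and it suffices to prove the Large Deviations bound for $\frac1n\bigl|\sum_{k=1}^n\tilde\varphi_k(y_k)\bigr|$, the almost sure assertion then following from Borel--Cantelli as in Section~5.

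Next I would reprove Lemma~\ref{l.contphi} in the present form, namely that the pushforward maps $(\tilde\varphi_n)_*\colon\mM(X_n)\to\mM([-M,M])$ are equi-uniformly continuous in $n$: this is literally the computation in the proof of Lemma~\ref{l.contphi}, since the $\delta$ produced there depends only on $\omega$ and the threshold $\eps''=\eps'\delta/(4M)$ only on $\omega$ and $M$, not on $n$ nor on which space $X_n$ is involved. Combining this with the Standing Assumption (with $X$ replaced by the $X_n$'s) gives, exactly as in~\eqref{eq:d-phi}, the key quantitative input: for every $\eps'>0$ there is $m$ such that for all $n$ and all $\msp\in\mM(X_n)$,
\[
\dist_{\mM([-M,M])}\bigl((\tilde\varphi_{n+m})_*(\mgr_{n+m}*\cdots*\mgr_{n+1}*\msp),\ (\tilde\varphi_{n+m})_*\msp_{n+m}\bigr)<\eps'.
\]

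With this in hand the rest of Section~5 transcribes without change, so I would only indicate the substitutions. Fix $\eps>0$, choose $m$ so the displayed estimate holds with $\eps'=\eps^2/(30M)$, restrict to $n=mq$, and split $\{1,\dots,mq\}$ into the $m$ residue classes mod $m$; it suffices to bound each subsum $\frac1q\bigl|\sum_{j=0}^{q-1}\tilde\varphi_{r+mj}(y_j)\bigr|$, where now $y_j=f_{r+mj}\circ\cdots\circ f_1(x)\in X_{r+mj}$. The conditional law of $\tilde\varphi_{r+mj}(y_j)$ given $\mF_{j-1}$ is $D_{j,y_{j-1}}:=(\tilde\varphi_{r+mj})_*(\mgr_{r+mj}*\cdots*\mgr_{r+1+m(j-1)}*\delta_{y_{j-1}})$, a measure on the \emph{fixed} interval $[-M,M]$ which by the above is $\eps'$-close to $(\tilde\varphi_{r+mj})_*\msp_{r+jm}$; hence the explicit one-dimensional optimal-transport coupling and the construction of $\xi_j,\xi'_j$ in Lemma~\ref{l:xi} apply word for word, and the Large Deviations estimates for the independent bounded families $(\xi_j)$ and $(\xi'_j)$ conclude the proof exactly as before.

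Since this is a routine generalization, there is no single serious obstacle; the only point requiring care is the bookkeeping of uniformity. One must check that every constant produced along the way — the $\delta$ in the continuity estimate, the threshold $\eps''$, the integer $m$, and the rates $\delta_1,\delta_2,C_1,C_2$ — depends only on $\omega$, $D$, $M$ and on the $m$ furnished by the Standing Assumption, and not on the individual spaces $X_n$ or on $n$. Once that is verified, the argument of Theorem~\ref{t.main} carries over directly to the non-stationary setting with moving fibers.
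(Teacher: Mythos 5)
Your proposal is correct and matches the paper's intent exactly: the paper omits the proof of Theorem~\ref{t:gen}, stating it is an almost verbatim repetition of the proof of Theorem~\ref{t.main}, which is precisely the adaptation you carry out. Your preliminary centering $\tilde\varphi_k=\varphi_k-\int_{X_k}\varphi_k\,d\msp_k$ (using the uniform diameter bound and common modulus of continuity to get a uniform bound $M$) and the observation that the constants in Lemma~\ref{l.contphi} depend only on $\omega$ and $M$ are exactly the points that make the repetition legitimate.
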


The proof of Theorem \ref{t:gen} is almost verbatim repetition of the proof of Theorem \ref{t.main}, and as such will be omitted.

\subsection{Counter-examples: assumptions that cannot be avoided}\label{s:counter}

This section is devoted to presenting the examples where Nonstationary Random Ergodic Theorem does not hold. The first of these shows why one has to require at least some upper bound on the number of iterations needed to ``diffuse'' the initial measure in the Standing Assumption:
\begin{example}\label{ex:slow}
Let the set $X=\{a,b\}$ consist of two points only. Fix a (sufficiently quickly growing) sequence $n_k=10^{k^2}$, and let measure $\mu_n$ be defined as $\mu_n=\delta_{\id}$ for $n\neq n_k$ and
\[
\mu_{n_k} = \frac{1}{2} \delta_{\id} + \frac{1}{2} \delta_{\sigma},
\]
where $\sigma$ is a transposition that interchanges $a$ and~$b$. Then, on one hand, for any initial $n$, any two measures $\msp,\msp'$ on $X$, and any $n_k>n$, one has
\[
\mgr_{n_k}*\ldots *\mgr_{n+1} *\msp' = \mgr_{n_k}*\ldots *\mgr_{n+1} *\msp = \frac{1}{2} \delta_{a} + \frac{1}{2} \delta_{b} =:\overline{\msp}.
\]
On the other hand, any individual orbit $(x_n)$ spends each interval of time between~$n_k$ and~$n_{k+1}$ either fully at~$a$, or fully at~$b$, with each of these probabilities occurring equiprobably and independently for different~$k$. As the quotient $\frac{n_{k+1}-n_k}{n_k}$ tends to infinity, this easily implies that the time averages of any function $\varphi$ almost surely have both $\varphi(a)$ and $\varphi(b)$ as its accumulation points. In particular, these time averages (for non-constant $\varphi$) do not converge.
\end{example}

The next example shows why in the Standing Assumption we avoid averaging in time. Actually, the absence of the natural identification between the phase spaces at different times (discussed in the previous section) already suggests that time-average of images of a measure is not a good object to be considered: that would require an addition of measures on the different spaces. However, even if we are dealing with a uniquely ergodic deterministic dynamical system, so that such an addition can be considered, the Nonstationary Ergodic Theorem for $\varphi_n$ depending on $n$ without the Standing Assumption may not hold:

\begin{example}\label{ex:no-average}
Let $X=S^1=\R/\Z$, and $f(x)=x+\alpha \mod 1$ be an irrational rotation. Then, the (classical) dynamical system $(X,f)$ is uniquely ergodic (its unique invariant measure is Lebesgue measure); however, the Standing Assumption does not hold (there is no randomness, and the distances between the orbits do not decrease). At the same time,  Cesaro averages of the images of any two initial measures on the circle converge to Lebesgue measure with some uniform rate.

Now, take any function $\varphi\in C(X)$ and consider the family $\varphi_n=\varphi\circ f^{-n}$. This family of functions is equicontinuous on~$X$. At the same, the conclusion of Theorem~\ref{t:gen} does not hold for these functions: for any initial point~$x\in X$ one has
\[
\frac{1}{n}\sum_{k=1}^n \varphi_k(f_k\circ \ldots \circ f_1(x)) = \frac{1}{n}\sum_{k=1}^n \varphi\circ f^{-k}(f^k(x))
=  \varphi(x),
\]
so there is no constant-like behaviour. Also, replacing the choice of functions by
\[
\varphi_n = \varphi \circ f^{-n + r(n)},
\]
where $r(n):=\max\{k \mid n>n_k\}$ for a fast-growing sequence $n_k=10^{k^2}$, similar to Example~\ref{ex:slow}, one gets the absence of a limit of ``non-stationary'' time averages for every initial point~$x$.
\end{example}

\section*{Acknowledgments}

We are grateful to Andrew T\"or\"ok for his remarks on the preliminary draft of this paper.

\end{document}